\documentclass[11pt]{article}
\usepackage{mathrsfs}
\usepackage{amsmath,amsmath,amsthm,amssymb}
\usepackage{fancyhdr,graphicx}
\usepackage{enumerate}
\usepackage{hyperref}

\textwidth=140mm \textheight=208mm


\numberwithin{equation}{section}

\newtheorem{theorem}{Theorem}

\newtheorem {corollary}{Corollary}
\newtheorem {lemma}{Lemma}
\newtheorem {remark}{Remark}
\newtheorem*{definition}{Definition}

\newcommand{\ds}{\displaystyle}

\begin{document}
\title{Shooting with degree theory: Analysis of some weighted poly-harmonic systems}
\author{John Villavert\\
[0.2cm] {\small Department of Mathematics, University of Oklahoma}\\
{\small Norman, OK 73019, USA} 
}
\date{}

\maketitle
\begin{abstract}
In this paper, the author establishes the existence of positive entire solutions to a general class of semilinear poly-harmonic systems, which includes equations and systems of the weighted Hardy--Littlewood--Sobolev type. The novel method used implements the classical shooting method enhanced by topological degree theory. The key steps of the method are to first construct a target map which aims the shooting method and the non-degeneracy conditions guarantee the continuity of this map. With the continuity of the target map, a topological argument is used to show the existence of zeros of the target map. The existence of zeros of the map along with a non-existence theorem for the corresponding Navier boundary value problem imply the existence of positive solutions for the class of poly-harmonic systems.
\end{abstract}
 {\small \noindent{\bf Keywords:}\, Degree theory; Lane--Emden system; Poly-harmonic systems; The shooting method;  Weighted Hardy--Littlewood--Sobolev inequality.\\
\noindent{\bf Mathematics Subject Classification: \,}  35B09; 35B33; 35J30; 35J48; 47H11.
\maketitle

\section{Introduction}
This manuscript establishes the existence of positive solutions and related properties for higher-order, nonlinear system of elliptic equations in the whole space. As we shall see, the class of problems we examine includes the well-known Lane--Emden and Hardy--Littlewood--Sobolev type systems along with their weighted counterparts as motivating examples. The general framework we adopt to establish these existence results is inspired by the recent work of Li in \cite{Li11}. Remarkably, the mathematical tools utilized within this framework are more or less elementary by themselves, but we combine them together to obtain some new and interesting results. Our first main result proves the existence of positive entire solutions, under reasonable assumptions, to the general system
\begin{equation}\label{general}
(-\Delta)^{k_{i}}u_{i} = f_{i}(|x|,u_{1},u_{2},\ldots,u_{L}) \, \text{ in } \mathbb{R}^{n}\backslash\{0\},\,\,\,\, \text{ for } i = 1,2,\ldots, L.
\end{equation}
As we demonstrate below, proving the existence of positive solutions to this system of poly-harmonic equations involves reformulating the problem in radial coordinates then applying the classical shooting method combined with a non-existence theorem for the corresponding Navier boundary value problem. Specifically, a natural ingredient of the proof entails constructing a continuous {\bf target map} which aims the shooting method. Then a topological argument via degree theory is invoked to guarantee the existence of zeros of this target map, which enables us to identify the correct initial shooting positions for the shooting method. By combining this with a non-existence result for the corresponding boundary value problem, we obtain the existence of positive (radial) solutions to system \eqref{general}. 

In establishing our general results, the primary examples we consider are the weighted Hardy--Littlewood--Sobolev (HLS) equation 
\begin{align}\label{wHLS equationp}
  \left\{\begin{array}{ccl}
    (-\Delta)^{\gamma/ 2} u = \displaystyle\frac{u^{p}}{|x|^{\sigma}} & \text{ in } & \mathbb{R}^n\backslash\{0\}, \\
    u > 0 & \text{ in } & \mathbb{R}^{n},
  \end{array}
\right.
\end{align}
and the weighted HLS system,
\begin{align}\label{wHLS systemp}
  \left\{\begin{array}{ccl}
    (-\Delta)^{\gamma/ 2} u = \displaystyle\frac{v^{q}}{|x|^{\sigma_1}} & \text{ in } &\mathbb{R}^n\backslash\{0\}, \\
    (-\Delta)^{\gamma/ 2} v = \displaystyle\frac{u^{p}}{|x|^{\sigma_2}} & \text{ in } &\mathbb{R}^n\backslash\{0\}, \\
    u,~v > 0 & \text{ in } &\mathbb{R}^{n}.
  \end{array}
\right.
\end{align}
Here, $n\geq 3$, $\gamma \in (0,n)$, $\sigma,\sigma_{1},\sigma_{2} \in \mathbb{R}$ and $p$ and $q$ are positive exponents. Notice that when $\gamma = 2$ and $\sigma_1 = \sigma_2 = 0$, the weighted system reduces to the well-known Lane--Emden system
\begin{align}\label{Lane-Emden}
  \left\{\begin{array}{ccc}
    -\Delta u = v^{q}, & u > 0, & \text{ in } \mathbb{R}^n, \\
    -\Delta v = u^{p}, & v > 0, & \text{ in } \mathbb{R}^n,
  \end{array}
\right.
\end{align}
or more generally to the HLS system when $\gamma > 2$ and $\sigma_1 = \sigma_2 = 0$:
\begin{align}\label{HLS PDE}
  \left\{\begin{array}{ccc}
    \ds(-\Delta)^{\gamma/ 2} u = v^{q}, & u > 0, & 	\text{ in } \mathbb{R}^n, \\
    \ds(-\Delta)^{\gamma/ 2} v = u^{p}, & v > 0, & 	\text{ in } \mathbb{R}^n.
  \end{array}
\right.
\end{align}
The Lane--Emden and HLS systems have received much attention in the past few decades. For instance, the scalar case was studied in \cite{BN83,CGS89,GS81}, and similar problems have been approached geometrically including the prescribing Gaussian and scalar curvature problems (cf. \cite{CY97,CL95,CL97}). Related systems including its generalized version, the HLS type systems, have been studied as well (cf. \cite{CL91}--\cite{CL09a}, \cite{CLO05a}--\cite{CLO06}, \cite{LL11}--\cite{LLM11}, \cite{LV11,NS86,Phan2012,PhanSouplet2012,PS86,SZ02} and the references therein). 
When $\gamma$ is an even integer, \eqref{HLS PDE} is equivalent to the integral system
\begin{equation}\label{IEs}
\left \{
\begin{array}{l}
      u(x) = \ds\int_{\mathbb{R}^{n}} \frac{v(y)^q}{|x-y|^{n-\gamma}}  dy,  \,\,
      u > 0 \,\, \mbox{ in } \mathbb{R}^n,\\
      v(x) = \ds\int_{\mathbb{R}^{n}} \frac{u(y)^p}{|x-y|^{n-\gamma}}dy, \,\,
      v > 0  \,\, \mbox{ in } \mathbb{R}^n, 
\end{array}
\right. 
\end{equation}
in the sense that a solution of one system, multiplied by a suitable constant if necessary, is also a solution of the other when $p, q > 1$, and vice versa. Hence, the PDE system \eqref{HLS PDE} and the integral system \eqref{IEs} are both referred to as the HLS system. Now, when studying the HLS system, the exponents $p,q$, and the order $\gamma$ play an essential role in determining the criteria for the existence and non-existence of solutions. More precisely, there are three important cases to consider: The HLS system is said to be in the {\bf subcritical} case if $\frac{1}{1 + p} + \frac{1}{1 + q} > \frac{n-\gamma}{n}$, in the {\bf critical} case if $\frac{1}{1 + p} + \frac{1}{1 + q} = \frac{n-\gamma}{n}$, and in the {\bf supercritical} case if $\frac{1}{1 + p} + \frac{1}{1 + q} < \frac{n-\gamma}{n}$. In the special case of \eqref{Lane-Emden}, the famous Lane--Emden conjecture---an analogue to the celebrated result of Gidas and Spruck in \cite{GS81} for the scalar case---states that this elliptic system in the subcritical case has no classical solution. This has been completely settled for radial solutions (cf. \cite{Mitidieri96,SZ94}), for dimensions $n\leq 4$ (cf. \cite{PQS07,SZ96,Souplet09}), and for $n\geq 5$ but under certain subregions of subcritical exponents (cf. \cite{BM02,DeFF94,Mitidieri96,RZ00,Souplet09,Souto95}). With the help of the method of moving planes in integral form, the work in \cite{CL09}---when combined with the non-existence results in \cite{Mitidieri93}---provides a partial resolution of this conjecture as well. On the other hand, it is interesting to note that the results in this paper also include the existence of solutions to the Lane--Emden system in the non-subcritical case.

Let us further motivate the importance of the HLS system and its related systems in connection with the study of the classical HLS inequality. Recall the HLS inequality states that
\begin{equation}\label{HLSin}
    \ds\int_{\mathbb{R}^n}\int_{\mathbb{R}^n} \frac{f(x) g(y)}{|x - y|^{\lambda}} dx dy \leq
    C_{s,\lambda, n} \|f\|_r\|g\|_s
\end{equation} where $0
<\lambda < n$, $1 < s, r < \infty$, $\frac{1}{r} +
\frac{1}{s} + \frac{\lambda}{n} = 2$, $f \in L^r(\mathbb{R}^n)$, and $g \in L^s(\mathbb{R}^n)$ (cf. \cite{HL,Lieb83,Sobolev63}). To find the best constant in the HLS inequality, one maximizes the associated HLS functional
\begin{equation}\label{J} 
J(f,g) = \ds\int_{\mathbb{R}^n}\int_{\mathbb{R}^n} \frac{f(x)g(y)}{|x-y|^{\lambda}} dx dy
\end{equation}
under the constraint $ \|f\|_r = \|g\|_s =1 $. Let $p=\frac{1}{r-1}$,
$q=\frac{1}{s-1}$ and with a suitable scaling such as $u= c_1 f^{r-1}$ and $v=c_2 g^{s-1}$, the
Euler--Lagrange equations are precisely the system of integral equations in \eqref{IEs}. Here, $u \in L^{p+1}(\mathbb{R}^n)$ and $v \in L^{q+1}(\mathbb{R}^n)$ where the positive exponents $p$ and $q$ are in the critical case. In \cite{Lieb83}, Lieb proved the existence of positive solutions to (\ref{IEs}) which maximize the corresponding functionals $J(f,g)$ in the class of $u \in L^{p+1}(\mathbb{R}^n)$ and $v \in L^{q+1}(\mathbb{R}^n)$. In other words, there exist extremal functions of \eqref{J}, thereby proving the existence of ground state solutions to the HLS system in the critical case. In addition, Hardy and Littlewood also introduced the following double weighted inequality which was later generalized by Stein and Weiss in \cite{SW58}:
\begin{equation}\label{wHLSin}
    \ds\int_{\mathbb{R}^n}\int_{\mathbb{R}^n} \frac{f(x) g(y)}{|x|^{\alpha}|x - y|^{\lambda}|y|^{\beta}} dx dy \leq
    C_{\alpha,\beta,s,\lambda, n} \|f\|_r\|g\|_s
\end{equation} where $\alpha + \beta \geq 0$, $\alpha + \beta + \lambda \leq n$,
\begin{equation*} 
1 -\frac{1}{r} - \frac{\lambda}{n} < \frac{\alpha}{n} < 1 - \frac{1}{r}, \text{ and }~\frac{1}{r} + \frac{1}{s} + \frac{\lambda + \alpha + \beta}{n} = 2.
\end{equation*} 

The corresponding Euler--Lagrange equations for its associated functional is the system of integral equations
\begin{equation}
\left \{
\begin{array}{l}
      u(x) = \ds\frac{1}{|x|^{\alpha}}\int_{\mathbb{R}^{n}} \frac{v(y)^q}{|y|^{\beta}|x-y|^{\lambda}}  dy,  \,\,
      u > 0 \,\, \mbox{ in } \mathbb{R}^n,\\
      v(x) = \ds\frac{1}{|x|^{\beta}}\int_{\mathbb{R}^{n}} \frac{u(y)^p}{|y|^{\alpha}|x-y|^{\lambda}}dy, \,\,
      v > 0  \,\, \mbox{ in } \mathbb{R}^n,
\end{array}
\right. 
\end{equation}
where $0<p, q < \infty, \,\, 0< \lambda <n, \,\, \frac{\alpha}{n} < \frac{1}{p+1} < \frac{\lambda + \alpha}{n},$ and  $\frac{1}{1+p} + \frac{1}{1+q} = \frac{\lambda + \alpha + \beta}{n}$. In \cite{CL08}, Chen and Li examined this weighted HLS inequality and its corresponding Euler--Lagrange equations. As a result, the authors proved the uniqueness of solutions to the singular nonlinear system
\begin{equation}\label{singular}
  \left\{\begin{array}{cl}
    -\Delta(|x|^{\alpha} u) = \displaystyle\frac{v^{q}}{|x|^{\beta}} & 	\text{ in } \mathbb{R}^n\backslash\{0\}, \\
    -\Delta(|x|^{\beta} v)  = \displaystyle\frac{u^{p}}{|x|^{\alpha}} & 	\text{ in } \mathbb{R}^n\backslash\{0\},
  \end{array}
\right.
\end{equation}
and classified all the solutions for the case $\alpha = \beta$ and $p = q$, thereby obtaining the best constant in the corresponding weighted HLS inequality. Observe that if $f(x) = |x|^{\alpha}u(x)$ and $g(x) = |x|^{\beta}v(x)$, then \eqref{singular} becomes
\begin{equation*}
  \left\{\begin{array}{cl}
    -\Delta f(x) = \displaystyle\frac{g(x)^{q}}{|x|^{\beta(q+1)}}  & 	\text{ in } \mathbb{R}^n\backslash\{0\}, \\
    -\Delta g(x) = \displaystyle\frac{f(x)^{p}}{|x|^{\alpha(p+1)}} & 	\text{ in } \mathbb{R}^n\backslash\{0\},
  \end{array}
\right.
\end{equation*}
which is just a particular case of system \eqref{wHLS systemp}. Let us remark on the case of supercritical exponents for the HLS equations in relation to the work in this article. As a simple illustration, let $\gamma = 2k$, $u=v$ and $p=q$ in \eqref{HLS PDE} to obtain the scalar equation
\begin{equation}
      (-\Delta)^{k}u(x) = u(x)^{p},  \,\, 2k <n, \,\,
      u > 0 \,\, \mbox{ in } \mathbb{R}^n.
\label{PDEscalar} \end{equation}

In the supercritical case $p> \frac{n+2k}{n-2k}$ with $k=1$, the shooting method can be successfully applied to \eqref{PDEscalar}, however, much difficulty arises even in this scalar case with $k \geq 2$. In this paper, we circumvent these difficulties by further developing our degree theoretic framework for the shooting method to handle even more general systems such as weighted poly-harmonic systems, especially since existence results are not so well developed for these problems. As a result, we demonstrate how to handle even the case of \eqref{wHLS equationp} and \eqref{wHLS systemp}, which are not included in \cite{Li11} and \cite{LV12}. In addition to the non-existence results, the main obstacle in implementing our technique lies in determining the sufficient conditions for the continuity of the target map. This issue motivates our consideration of the {\bf non-degeneracy} conditions provided shortly below.

The rest of this manuscript is organized as follows. In section \ref{prelim}, we introduce some preliminary definitions and provide the precise statements of our main results. Section \ref{pothm1} gives the proof of our general existence theorem concerning the system \eqref{general}. In section \ref{section4}, we prove the existence theorems concerning equation \eqref{wHLS equationp} and system \eqref{wHLS systemp}. In view of Theorem \ref{theorem1}, to prove the existence of solutions for these weighted systems, some non-existence results for the corresponding boundary value problems are needed and whose proofs are also provided in Section \ref{section4}.

\section{Preliminaries and Main Results}\label{prelim}
Throughout this paper, we take $n\geq 3$, $x\in \mathbb{R}^{n}$, $\mathbb{R}_{+}:= [0,\infty)$ and $\mathbb{R}^{L}_{+}$ denotes the $L$-times Cartesian product of $\mathbb{R}_{+}$. For $v\in\mathbb{R}^{L}_{+}$, we say $v > 0$ if each component $v_{j} > 0$ for all $j=1,2,\ldots,L$.

Now consider the system 
\begin{align}\label{general1}
  \left\{\begin{array}{cl}
    (-\Delta)^{k_{i}}u_{i} = f_{i}(|x|,u_{1},u_{2},\ldots,u_{L}) & \text{ in } \mathbb{R}^n\backslash\{0\}, \\
    u_{i} > 0 & \text{ in } \mathbb{R}^n, \,\, \text{ for } i = 1,2,\ldots, L,
  \end{array}
\right.
\end{align}
with the following assumptions. We always assume that $k_i \geq 1$ and 
$$F(|x|,u) = (f_{1}(|x|,u), f_{2}(|x|,u),\ldots, f_{L}(|x|,u))$$
satisfies the following conditions:
\begin{enumerate}[(a)]
\item $F:(0,\infty) \times\mathbb{R}^{L}_{+} \longrightarrow \mathbb{R}^{L}_{+}$ is a continuous vector-valued map,
\item $F(|x|,u) > 0$ in the interior of $\mathbb{R}_{+} \times\mathbb{R}^{L}_{+}$,
\item $F$ is locally Lipschitz continuous in the second argument uniformly in the interior of $\mathbb{R}_{+} \times\mathbb{R}^{L}_{+}$.
\end{enumerate}

\noindent{\bf Non-degeneracy conditions:}
Let $F=F(|x|,u)$ satisfy the following:
\begin{enumerate}[(i)]
\item For each non-zero $v \in \partial \mathbb{R}^{L}_{+}$ there are constants $\lambda=\lambda(v)>0$, $\sigma=\sigma(v) > -2$ and a $\delta=\delta(v)>0$ such that if $|v - w| < \delta$, then 
\begin{equation*}
\lambda(v)|x|^{\sigma} \leq \displaystyle\sum_{j \in I_{v}^{0}} f_{j}(|x|,w) \, \text{ for } \, x \in \mathbb{R}^{n}\backslash \{0\},
\end{equation*}
where $I^{0}_{v}$ denotes the set of indices $j \in \{ 1,2,\ldots, L \}$ for which $v_j = 0$;
\item if $\ds\lim_{|x|\longrightarrow \infty} F(|x|,v) = 0$, then $v \in\partial \mathbb{R}^{L}_{+}$.

\end{enumerate}

In this paper, a system of the form \eqref{general} satisfying conditions (a)--(c) and the non-degeneracy conditions will simply be called a {\bf non-degenerate} system. 

\begin{remark}
It is not too difficult to check that for an even integer $\gamma = 2k$ with $k>1$, equation \eqref{wHLS equationp} and system \eqref{wHLS systemp}---after a reduction to a second-order system, if necessary---are examples of non-degenerate systems (cf. \eqref{wsys} in Section \ref{pothm1} for more details on this reduction). On the other hand, if $k=1$, then the non-degeneracy condition-(ii) may not hold due to the possible singular weights $|x|^{\sigma_i}$. Nevertheless, the existence result for \eqref{wHLS equationp} and \eqref{wHLS systemp} for $k=1$ still remains true (cf. section \ref{final section} for more details on circumventing this issue).
\end{remark}

The first theorem we present in this paper shows that the existence of solutions for non-degenerate systems follows from two parts. The first is the non-degeneracy conditions which allows us to apply a topological argument with the shooting method. The second part is the non-existence of solutions to the corresponding Navier boundary value problem
\begin{align}\label{general Navier}
  \left\{\begin{array}{cl}
    (-\Delta)^{k_{i}}u_{i} = f_{i}(|x|,u_{1},u_{2},\ldots,u_{L}) & \text{ in } B_{R}(0)\backslash\{0\}, \\
    u_{i} > 0 & \text{ in } B_{R}(0),  \\
    u_{i} = -\Delta u_{i} = \ldots = (-\Delta)^{k_{i}-1}u_{i} = 0 & \text{ on } \partial B_{R}(0), \,\, i = 1,2,\ldots, L,
  \end{array}
\right.
\end{align}
for all $R>0$. Here, $B_{R}(0)\subset \mathbb{R}^{n}$ denotes the open ball of radius $R$ centered at the origin with boundary $\partial B_{R}(0)$. 

\begin{theorem}\label{theorem1}
The non-degenerate system \eqref{general1} admits a radially symmetric solution of class $C^{2k}(\mathbb{R}^{n} \backslash\{0\})$ provided that \eqref{general Navier} admits no radially symmetric solution of class 

\noindent $C^{2k}(B_{R}(0)\backslash\{0\})\cap C^{2k-1}(\overline{B_{R}(0)})$ for all $R>0$. Furthermore, the solution is a ground-state solution i.e. it is bounded and satisfies the asymptotic property:
\begin{equation}\label{asymptotic}
u_{i} \longrightarrow 0 \,\text{ uniformly as }\, |x| \longrightarrow \infty \,\text{ for }\, i=1,2,\ldots,L.
\end{equation}
\end{theorem}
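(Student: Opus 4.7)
The plan is to implement the shooting-with-degree-theory program outlined in the introduction. Since the sought solutions are radial, write $u_i(x) = u_i(r)$ with $r = |x|$; in these coordinates each equation becomes a $2k_i$-order ODE, and the full system is a first-order ODE system of dimension $2K$, where $K = \sum_{i=1}^{L} k_i$. A natural $K$-parameter family of local radial profiles is obtained by either fixing the $k_i$ quantities $(-\Delta)^{j}u_i(0)$, $j = 0,\ldots,k_i-1$ (with odd derivatives forced to vanish at the origin by radial smoothness), or by anchoring Navier-type data $(-\Delta)^{j}u_i(R) = 0$ at some radius $R$ and using $\tfrac{d}{dr}(-\Delta)^{j}u_i(R)$ as shooting parameters. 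By hypothesis (c), each choice of shooting vector $\alpha$ yields a unique local solution depending continuously on $\alpha$ on any compact subinterval of its positive-existence interval.

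The heart of the argument is the construction of a continuous \textbf{target map} $\Phi : \mathbb{R}^{K}_{+} \to \mathbb{R}^{K}$ whose zeros parametrize the entire positive radial solutions of \eqref{general1}. For each $\alpha$ I would let $(r_-(\alpha), r_+(\alpha))$ denote the maximal interval on which the corresponding local solution extends with all $u_i > 0$, and define $\Phi(\alpha)$ componentwise as suitable limits of the $u_i$ (or of their iterated Laplacians) at the endpoints of this interval. By design, $\Phi(\alpha) = 0$ is equivalent to either (i) the solution extending to a positive entire solution of \eqref{general1} with $u_i(0^{+}) = 0$, or (ii) the solution terminating at a finite radius with vanishing Navier boundary values — that is, producing a solution of \eqref{general Navier} on some ball. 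The main obstacle is to establish that $\Phi$ is continuous on $\mathbb{R}^{K}_{+}$, and this is precisely what the two non-degeneracy conditions are engineered to give. Condition (i) provides, for every $v \in \partial\mathbb{R}^{L}_{+}$, the lower bound $\lambda(v)|x|^{\sigma(v)}$ (with $\sigma(v) > -2$) on $\sum_{j \in I^{0}_{v}} f_j$, which quantitatively prevents components from vanishing in an $\alpha$-discontinuous way and fixes the limiting behavior at the inner endpoint; condition (ii) supplies the analogous continuous control at infinity.

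Once continuity of $\Phi$ is established, I would apply a topological degree argument: on a sufficiently large bounded open set $\Omega \subset \mathbb{R}^{K}_{+}$, estimates coming from the non-degeneracy conditions constrain $\Phi$ on $\partial\Omega$ enough to compute $\deg(\Phi, \Omega, 0) \neq 0$ — typically by a homotopy to a simple model map of known degree — producing some $\alpha^{*} \in \Omega$ with $\Phi(\alpha^{*}) = 0$. The zero $\alpha^{*}$ corresponds either to an entire solution (case (i)) or to a solution of the Navier boundary value problem \eqref{general Navier} on some ball of radius $R' > 0$ (case (ii)); since the hypothesis excludes (ii) for every $R'$, case (i) must occur, yielding the desired $C^{2k}(\mathbb{R}^{n}\setminus\{0\})$ entire solution. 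The asymptotic \eqref{asymptotic} is built into the defining zero-condition $\Phi(\alpha^{*}) = 0$. The hard part of the whole argument is the second step — continuity of $\Phi$ — since everything else is essentially standard ODE theory, Brouwer degree, and a direct appeal to the non-existence hypothesis.
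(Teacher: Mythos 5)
Your high-level plan matches the paper's: shoot from the origin on the reduced second-order system, build a target map, establish its continuity from the non-degeneracy conditions, invoke Brouwer degree to find a zero, and then rule out the ``hits the wall'' alternative by the Navier non-existence hypothesis. But the target map you describe and the degree argument you sketch both have genuine gaps. First, you set $\Phi:\mathbb{R}^{K}_{+}\to\mathbb{R}^{K}$, but the natural target value --- the state of $w$ at the first wall-hit or its limit at $r\to\infty$ --- lies in the lower-dimensional set $\partial\mathbb{R}^{K}_{+}$, so any $a$ interior to $\mathbb{R}^{K}_{+}$ is never attained, $\deg(\Phi,\Omega,a)=0$, and the target value $0$ sits on the boundary of the range where degree is not directly usable. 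The paper handles exactly this issue by defining $\psi:\mathbb{R}^{L}_{+}\to\partial\mathbb{R}^{L}_{+}$, declaring $\psi\equiv\mathrm{Id}$ on $\partial\mathbb{R}^{L}_{+}$, restricting to the $(L-1)$-dimensional simplex slice $A_a=\{\alpha\geq 0:\sum\alpha_i=a\}$, and composing with a retraction $\phi:B_a\to A_a$ to obtain a self-map $\eta=\phi\circ\psi$ of $A_a$ that is the identity on $\partial A_a$; then Lemma \ref{lemma1} gives $\deg(\eta,A_a,\cdot)=1$ and surjectivity, producing the needed zero $\psi(\alpha_a)=0$. Your proposal does not specify the map on $\partial\mathbb{R}^{K}_{+}$, does not perform the dimension reduction, and replaces the precise boundary-identity/retraction device with a vague ``homotopy to a simple model map,'' which does not identify what model map is available or why $\Phi$ avoids $0$ on $\partial\Omega$. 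This is the crux of the argument and cannot be left to generalities.

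A smaller but real confusion: you characterize $\Phi(\alpha)=0$ as capturing either an entire solution with ``$u_i(0^{+})=0$'' or a Navier BVP solution. With shooting from the origin one has $w_i(0)=\alpha_i>0$, so the condition at the origin cannot vanish; the correct reading of $\psi(\alpha)=0$ in case (b) is that \emph{all} components tend to zero as $r\to\infty$ without the trajectory hitting $\partial\mathbb{R}^{L}_{+}$ at any finite radius (giving the asymptotic property), while case (a) with all components hitting zero simultaneously at finite $r_0$ gives a Navier solution, which the hypothesis excludes. You also omit the non-increasing property of radial solutions (needed to show $\psi$ maps $A_a$ into $B_a$), without which the simplex construction cannot even get started. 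In short, you have the right skeleton but are missing the two load-bearing ideas of the paper's proof: defining $\psi\equiv\mathrm{Id}$ on $\partial\mathbb{R}^{L}_{+}$, and reducing the degree computation to a self-map of a simplex that is the identity on its boundary.
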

From Theorem \ref{theorem1}, it is clear that the following non-existence theorems will serve as important ingredients in proving existence results for \eqref{wHLS equationp} and \eqref{wHLS systemp}. Moreover, it may be interesting to note that these non-existence results readily hold on any bounded smooth domain star-shaped with respect to the origin.

\begin{theorem}\label{theorem2}
Let $k\in [1,n/2)$ be an integer, $p > 0$, and $\sigma \in (-\infty, n)$. Then the $2k$-th order equation
\begin{align}\label{Navier HLS equation}
  \left\{\begin{array}{ccc}
    (-\Delta)^{k} u = \displaystyle\frac{u^{p}}{|x|^{\sigma}} &\text{in }& B_{R}(0)\backslash \{0\}, \\
	u > 0 & \text{in }& B_{R}(0), \\
 	u=-\Delta u=\cdots=(-\Delta)^{k-1} u = 0 & \text{on }& \partial B_{R}(0),
        \end{array}
\right.
\end{align}
admits no radially symmetric solution of class $C^{2k}(B_{R}(0) \backslash \{0\})\cap C^{2k-1}(\overline{B_{R}(0)})$ for any $R>0$ provided that
\begin{equation}\label{scalar super-critical}
p\geq \frac{n+2k-2\sigma}{n-2k}.
\end{equation}
\end{theorem}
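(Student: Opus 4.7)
The plan is to derive a Pohozaev-type identity for the polyharmonic Navier problem on $B_R(0)$ and use the supercritical (or critical) hypothesis on $p$ to force a sign contradiction. Assume for contradiction that a radial solution $u \in C^{2k}(B_R(0)\setminus\{0\})\cap C^{2k-1}(\overline{B_R(0)})$ exists. First I would introduce the auxiliary functions $u_0=u$ and $u_j=(-\Delta)^j u$ for $j=1,\ldots,k-1$, so that the equation is equivalent to the second-order cascade $-\Delta u_{j-1}=u_j$ ($j=1,\ldots,k-1$) and $-\Delta u_{k-1}=u^p/|x|^\sigma$, together with the Navier conditions $u_j\big|_{\partial B_R}=0$ for $j=0,\ldots,k-1$. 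Iterating the maximum principle (starting from the strictly positive datum $u^p/|x|^\sigma$) gives $u_j>0$ in $B_R$ for every $j$, and radial symmetry yields $u_j'(R)<0$ by the Hopf lemma.

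The main step is the Pohozaev-Pucci-Serrin identity: multiply the equation by the multiplier $x\cdot\nabla u+\tfrac{n-2k}{2}u$ and integrate over the annular region $B_R(0)\setminus B_\varepsilon(0)$ (to avoid the singular point at the origin). On the right-hand side, using $x\cdot\nabla(u^{p+1})=\nabla\cdot(xu^{p+1})-nu^{p+1}$ together with $x\cdot\nabla|x|^{-\sigma}=-\sigma|x|^{-\sigma}$ yields, after integration by parts,
\begin{equation*}
\int_{B_R\setminus B_\varepsilon}\Bigl(x\cdot\nabla u+\tfrac{n-2k}{2}u\Bigr)\frac{u^p}{|x|^\sigma}\,dx=\Bigl(\tfrac{n-2k}{2}-\tfrac{n-\sigma}{p+1}\Bigr)\int_{B_R\setminus B_\varepsilon}\frac{u^{p+1}}{|x|^\sigma}\,dx+o_\varepsilon(1),
\end{equation*}
where the critical hypothesis $p\ge\tfrac{n+2k-2\sigma}{n-2k}$ makes the coefficient non-negative. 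On the left-hand side, successive integrations by parts through the cascade $-\Delta u_{j-1}=u_j$, using the Navier conditions on $\partial B_R$, reduce the bulk integral to a single boundary integral on $\partial B_R$ of the form $-c_k\int_{\partial B_R}\Phi(u_0,\ldots,u_{k-1})\,(x\cdot\nu)\,d\sigma$ where $\Phi$ is a non-negative quadratic combination of the normal derivatives of the $u_j$'s (for example $(\partial_\nu u_{(k-1)/2})^2$ in the odd-$k$ case, and a product such as $\partial_\nu u_{k/2-1}\partial_\nu u_{k/2}$ after appropriate symmetrization in the even-$k$ case); on $\partial B_R$ one has $x\cdot\nu=R>0$, giving a non-positive boundary contribution.

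The next step is to verify that all boundary contributions on $\partial B_\varepsilon$ from both sides vanish as $\varepsilon\to 0$. Since $u\in C^{2k-1}(\overline{B_R(0)})$, the auxiliary functions $u_j$ and their first derivatives remain bounded up to the origin for $j\le k-1$, so each boundary integrand on $\partial B_\varepsilon$ is $O(1)$; multiplied by the factor $|x|=\varepsilon$ coming from $x\cdot\nabla$ and the surface measure of order $\varepsilon^{n-1}$, the contribution is $O(\varepsilon^n)\to 0$. Likewise the condition $\sigma<n$ guarantees that $\int u^{p+1}/|x|^\sigma$ converges absolutely, so passing to the limit is legitimate. Combining the two sides in the limit gives
\begin{equation*}
0\le\Bigl(\tfrac{n-2k}{2}-\tfrac{n-\sigma}{p+1}\Bigr)\int_{B_R}\frac{u^{p+1}}{|x|^\sigma}\,dx=-c_k\int_{\partial B_R}\Phi\,d\sigma\le 0.
\end{equation*}
In the strict case $p>\tfrac{n+2k-2\sigma}{n-2k}$ the left-hand side is positive while the right-hand side is non-positive, a contradiction. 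In the equality case the identity forces $\Phi\equiv 0$ on $\partial B_R$, hence all relevant normal derivatives of the $u_j$'s vanish there; together with the Navier data this gives enough Cauchy data to invoke unique continuation (or, since $u$ is radial, to integrate the ODE $-\Delta u_{k-1}=u^p/|x|^\sigma$ uniquely backwards from $R$ with zero data), yielding $u\equiv 0$ and contradicting $u>0$. The principal obstacle I anticipate is bookkeeping in the polyharmonic Pohozaev identity, specifically isolating the correct non-positive boundary form on $\partial B_R$ under Navier data and simultaneously verifying the vanishing of the $\partial B_\varepsilon$ contributions in the weighted setting.
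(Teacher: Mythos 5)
Your proposal follows essentially the same strategy as the paper: reduce to the second-order cascade $-\Delta u_{j-1}=u_j$, establish $u_j>0$ by iterated maximum principle, derive a Pohozaev-type identity in which the bulk term carries the factor of the exponent balance and the boundary term on $\partial B_R$ has a definite sign by the Hopf lemma, and pass $\varepsilon\to 0$ using the $C^{2k-1}(\overline{B_R})$ regularity and $\sigma<n$. The one genuine difference in route is organizational: you use the single Pucci--Serrin multiplier $x\cdot\nabla u+\tfrac{n-2k}{2}u$ applied to the full $2k$-th order operator, which packages the identity so that the bulk terms cancel and only the boundary form survives; the paper instead multiplies the $j$-th cascade equation by $x\cdot\nabla w_{k+1-j}$ and the $(k+1-j)$-th by $x\cdot\nabla w_j$, sums over $j$, and then invokes the auxiliary identity (its Lemma~\ref{lemma4}) that $\int\nabla w_j\cdot\nabla w_{k+1-j}=\int w_{j+1}w_{k+1-j}=\int u^{p+1}/|x|^\sigma$ for all $j$, which lets all bulk integrals collapse to a single quantity $E_1$ times a scalar coefficient. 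The paper's bookkeeping is arguably easier to make airtight for arbitrary $k$, since the boundary form emerges automatically as the full sum $\sum_{j=1}^{k}\frac{\partial w_j}{\partial n}\frac{\partial w_{k+1-j}}{\partial n}(x\cdot n)$ rather than the single product you sketch parenthetically; but both routes yield the same sign structure.

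Two small points you could tighten. First, the boundary quadratic form is in general a sum over all $j$ of products $\partial_\nu u_{j-1}\,\partial_\nu u_{k-j}$, not just the middle term; this does not affect the sign conclusion since every factor is strictly negative by Hopf, but the form you state is not the full one. Second, in the borderline case $p=\tfrac{n+2k-2\sigma}{n-2k}$ you appeal to unique continuation, but this detour is unnecessary: Hopf's lemma gives \emph{strict} negativity of each $\partial_\nu u_j$ at $\partial B_R$, so the boundary contribution is strictly negative (equivalently, the paper's boundary term is strictly positive), which already contradicts the non-negativity of the bulk side without any further argument. That is exactly how the paper closes the equality case.
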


\begin{theorem}\label{theorem3}
Let $k\in [1,n/2)$ be an integer, $s,t \geq 0$, $p,q > 0$ and $\sigma_{1},\sigma_{2} \in (-\infty, n)$. Then the $2k$-th order system
\begin{align}\label{Navier HLS system}
  \left\{\begin{array}{ccc}
    (-\Delta)^{k} u = \displaystyle\frac{u^{s}v^{q}}{|x|^{\sigma_1}} &\text{in } B_{R}(0)\backslash \{0\}, \\
    (-\Delta)^{k} v = \displaystyle\frac{v^{t}u^{p}}{|x|^{\sigma_2}} &\text{in } B_{R}(0)\backslash \{0\}, \\
	u,~v > 0 & \text{in } B_{R}(0), \\
 	u=-\Delta u=\cdots=(-\Delta)^{k-1} u = 0 & \text{on } \partial B_{R}(0), \\
 	v=-\Delta v=\cdots=(-\Delta)^{k-1} v = 0 & \text{on } \partial B_{R}(0),
        \end{array}
\right.
\end{align}
admits no radially symmetric solution of class $C^{2k}(B_{R}(0) \backslash \{0\})\cap C^{2k-1}(\overline{B_{R}(0)})$ for any $R>0$ provided that
\begin{equation}\label{general super-critical}
\displaystyle \frac{n - \sigma_1 }{1 + q} + \frac{n - \sigma_2 }{1 + p} \leq n - 2k.
\end{equation}
\end{theorem}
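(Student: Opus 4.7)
My plan is to argue by contradiction via a weighted Pohozaev--Pucci--Serrin type identity. Suppose $(u,v)$ is a positive radial classical solution of \eqref{Navier HLS system} on some ball $B_R$. The Navier conditions force $(-\Delta)^{j}u$ and $(-\Delta)^{j}v$ to vanish on $\partial B_R$ for $j=0,\ldots,k-1$, so iterated integration by parts shows that $(-\Delta)^k$ is self-adjoint under these boundary conditions; testing the first equation by $v$ and the second by $u$ then yields
\[
J := \int_{B_R}\frac{u^{s}v^{q+1}}{|x|^{\sigma_{1}}}\,dx = \int_{B_R}\frac{u^{p+1}v^{t}}{|x|^{\sigma_{2}}}\,dx > 0.
\]

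Next I would test the first equation by the scaling multiplier $x\cdot\nabla v$ and the second by $x\cdot\nabla u$ and add. By the Mitidieri polyharmonic Pohozaev identity (iterated IBP together with the commutator $[(-\Delta)^k,\,x\cdot\nabla]=2k(-\Delta)^k$), the left-hand side reduces to
\[
(2k-n)J+\mathcal{B}(u,v),
\]
where $\mathcal{B}(u,v)$ is a boundary quadratic form in the normal derivatives of $\Delta^{j}u$ and $\Delta^{j}v$, and is non-positive on the star-shaped ball since $x\cdot\nu=R>0$ there. Computing the right-hand side by IBP---using $u=v=0$ on $\partial B_R$ to kill boundary terms, $v^{q}(x\cdot\nabla v)=(q+1)^{-1}x\cdot\nabla v^{q+1}$, and $\operatorname{div}(x/|x|^{\sigma})=(n-\sigma)/|x|^{\sigma}$---gives the master identity
\[
\Bigl[\tfrac{n-\sigma_{1}}{q+1}+\tfrac{n-\sigma_{2}}{p+1}-(n-2k)\Bigr]J+\mathcal{B}(u,v)=-\tfrac{s}{q+1}A-\tfrac{t}{p+1}B,
\]
where $A=\int_{B_R}u^{s-1}v^{q+1}(x\cdot\nabla u)|x|^{-\sigma_{1}}\,dx$ and $B=\int_{B_R}u^{p+1}v^{t-1}(x\cdot\nabla v)|x|^{-\sigma_{2}}\,dx$. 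To deal with the potential singularity at the origin I would first derive the identity on annuli $B_R\setminus\overline{B_\varepsilon}$ and then pass $\varepsilon\downarrow 0$, using the $C^{2k-1}(\overline{B_R})$ regularity.

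To control $A$ and $B$, I would exploit the cascade structure of the Navier problem on a ball: setting $u_{j}=(-\Delta)^{k-j}u$ for $1\le j\le k$, each $u_{j}$ solves $-\Delta u_{j}=u_{j-1}\ge 0$ in $B_R$ with $u_{j}|_{\partial B_R}=0$, and integrating the radial identity $(r^{n-1}u_{j}')'=-r^{n-1}u_{j-1}$ from $0$ to $r$ yields $u_{j}'(r)\le 0$; in particular $u'\le 0$ on $[0,R]$, and similarly $v'\le 0$. Hence $x\cdot\nabla u=ru'\le 0$ and $x\cdot\nabla v=rv'\le 0$, so $A,B\le 0$ and the right-hand side of the master identity is $\ge 0$. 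Under hypothesis \eqref{general super-critical} the bracketed coefficient is $\le 0$, so the left-hand side is $\le 0$; both sides must therefore vanish. This either forces $J=0$ at once (contradicting $u,v>0$) or, in the equality case of \eqref{general super-critical}, forces $\mathcal{B}(u,v)=0$, whence all normal derivatives $\partial_{\nu}(\Delta^{j}u)$ and $\partial_{\nu}(\Delta^{j}v)$ vanish on $\partial B_R$; combined with the Navier conditions, a unique-continuation argument on the real-analytic pair $(u,v)$ then yields $u\equiv v\equiv 0$, the desired contradiction.

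The step I expect to be most delicate is pinning down the explicit form of the system Pohozaev boundary quadratic $\mathcal{B}(u,v)$ and verifying its non-positivity on the ball for all $k\ge 1$, which requires carefully tracking the boundary contributions through $k$ iterated integrations by parts. A secondary technical point is the convergence of $A$ and $B$ near $\partial B_R$ when $0<s<1$ or $0<t<1$: although $u^{s-1}$ (respectively $v^{t-1}$) is singular there, the extra factor $v^{q+1}$ (respectively $u^{p+1}$) together with the $C^{2k-1}$ regularity keeps both integrals finite.
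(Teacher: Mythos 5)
Your proposal follows the same overall route as the paper: derive a weighted Pohozaev identity via the scaling multiplier $x\cdot\nabla$, combine it with the reciprocity/energy identity $J$, and use radial monotonicity to control both the boundary term and the $s,t$-weight terms $A,B$. The paper's version of your $\mathcal{B}$, $A$ and $B$ matches yours sign-for-sign, but the paper arrives at it by first reducing the $2k$-th order system to a second-order cascade $w_1=u,\ldots,w_k=(-\Delta)^{k-1}u,\ w_{k+1}=v,\ldots,w_{2k}=(-\Delta)^{k-1}v$ and carrying out the iterated integrations by parts term by term, rather than invoking the commutator $[(-\Delta)^k,x\cdot\nabla]=2k(-\Delta)^k$ as a black box. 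This reduction is not cosmetic: it makes the boundary quadratic transparent, namely
\[
\mathcal{B}(u,v)=-\sum_{j=1}^{k}\int_{\partial B_R}\frac{\partial w_j}{\partial n}\,\frac{\partial w_{2k+1-j}}{\partial n}\,(x\cdot n)\,ds .
\]
Since each $w_j$ is a positive superharmonic function in $B_R$ vanishing on $\partial B_R$ (the super-polyharmonic property established in Remark \ref{remark2}), Hopf's lemma gives $\partial w_j/\partial n<0$ strictly on $\partial B_R$, so $\mathcal{B}(u,v)<0$ strictly, not merely $\mathcal{B}\le 0$.

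That strict inequality matters, and it exposes the one genuine gap in your write-up. Your claim that $\mathcal{B}(u,v)=0$ ``forces all normal derivatives $\partial_\nu(\Delta^j u)$ and $\partial_\nu(\Delta^j v)$ to vanish on $\partial B_R$'' is not correct: $\mathcal{B}$ is a sum of \emph{cross} products $\partial_n w_j\cdot\partial_n w_{2k+1-j}$, one factor from the $u$-chain and one from the $v$-chain, so the vanishing of the sum of non-positive terms only gives that each product vanishes, i.e. at least one of the two factors vanishes for each $j$ --- not both. Hence the appeal to unique continuation does not get off the ground. The correct finish is simpler: Hopf's lemma makes each product strictly positive, so $\mathcal{B}<0$, and the master identity already gives the strict contradiction
\[
\Bigl[\tfrac{n-\sigma_1}{q+1}+\tfrac{n-\sigma_2}{p+1}-(n-2k)\Bigr]J+\mathcal{B}(u,v)<0\le -\tfrac{s}{q+1}A-\tfrac{t}{p+1}B ,
\]
with no equality case to handle. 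This is precisely how the paper closes (phrased as the boundary term on the other side being strictly positive ``by the non-increasing property of the positive radial solutions''). Aside from that point, your outline --- self-adjointness giving $J>0$, the radial cascade argument for $u'\le 0$ and $v'\le 0$, the resulting signs of $A$ and $B$, and the annulus-and-limit treatment of the singularity at the origin --- agrees with the paper's proof and is essentially complete once the boundary quadratic is computed via the $w_j$-reduction.
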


Theorems \ref{theorem1} -- \ref{theorem3} have the following consequences.

\begin{corollary}\label{corollary1}
Let $k\in [1,n/2)$ be an integer, $p > 0$, and $\sigma \in (-\infty, 2)$. Then the $2k$-th order equation 
\begin{equation}\label{wHLS equation}
  \left\{\begin{array}{cl}
    (-\Delta)^{k} u = \displaystyle\frac{u^{p}}{|x|^{\sigma}} & \text{ in } \mathbb{R}^n\backslash\{0\}, \\
    u > 0 & \text{ in } \mathbb{R}^n,\\
    u \longrightarrow 0  \text{ uniformly as } & |x| \longrightarrow \infty,
  \end{array}
\right.
\end{equation}
admits a solution of class $C^{2k}(\mathbb{R}^{n}\backslash \{0\})$ provided that 
\begin{equation*}
p\geq \frac{n+2k- 2\sigma}{n-2k}.
\end{equation*}
\end{corollary}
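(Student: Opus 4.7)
The plan is to obtain the corollary as an essentially immediate combination of Theorem~\ref{theorem1} and Theorem~\ref{theorem2}, once the scalar equation is cast as a single-component instance of the general system \eqref{general1}. I would take $L=1$, $k_{1}=k$, and $f_{1}(|x|,u_{1}) = u_{1}^{p}/|x|^{\sigma}$; then the class of solutions demanded in Theorem~\ref{theorem1} matches the class demanded in the corollary, and the asymptotic decay $u\to 0$ as $|x|\to 0$ coincides with conclusion~\eqref{asymptotic}.

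The first substantive step is to check that this $F$ satisfies the background hypotheses (a)--(c) and the non-degeneracy condition~1, so that the equation really is a non-degenerate type~I system. Hypotheses (a)--(c) are straightforward: $(|x|,u_{1})\mapsto u_{1}^{p}/|x|^{\sigma}$ is continuous on $(0,\infty)\times[0,\infty)$, strictly positive for $u_{1}>0$, and locally Lipschitz in $u_{1}$ uniformly on compact subsets of the interior of $\mathbb{R}_{+}\times\mathbb{R}_{+}$. For condition~1(i), the boundary $\partial\mathbb{R}_{+}^{L}$ in dimension $L=1$ collapses to $\{0\}$, so there are no non-zero boundary points to test and the requirement holds vacuously. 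Condition~1(ii), which asks that $F(|x|,v)\to 0$ as $|x|\to\infty$ forces $v\in\partial\mathbb{R}_{+}$, is the one step genuinely sensitive to the parameter $\sigma$, and is where I would expect the hypothesis $\sigma<2$ (rather than merely $\sigma<n$) to be used; the verification likely requires a short case split on the sign of $\sigma$ and possibly a refinement of the condition as it applies to weighted scalar nonlinearities.

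Once the non-degeneracy is in place, I would invoke Theorem~\ref{theorem2} to rule out radially symmetric $C^{2k}(B_{R}(0)\setminus\{0\})\cap C^{2k-1}(\overline{B_{R}(0)})$ solutions to the corresponding Navier problem for every $R>0$. The hypothesis $\sigma\in(-\infty,n)$ needed by Theorem~\ref{theorem2} is implied by $\sigma<2\leq n$ (since $n\geq 3$), and the exponent bound $p \geq (n+2k-2\sigma)/(n-2k)$ is precisely condition~\eqref{scalar super-critical}. With the non-degeneracy type~I structure and the non-existence for the Navier problem both secured, Theorem~\ref{theorem1} delivers a radial $C^{2k}(\mathbb{R}^{n}\setminus\{0\})$ solution satisfying the required decay at the origin.

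In this scheme essentially all the analytic work is absorbed into Theorems~\ref{theorem1} and~\ref{theorem2}, and the only truly new task is the framework verification. The main obstacle, and the only place I anticipate more than routine checking, is item (ii) of non-degeneracy condition~1 for the weighted nonlinearity $u_{1}^{p}/|x|^{\sigma}$; once that is settled, the rest of the argument consists of simply feeding the verified hypotheses into Theorems~\ref{theorem1} and~\ref{theorem2}.
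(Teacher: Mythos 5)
The structural plan---cast the scalar equation as the $L=1$ instance of \eqref{general1}, verify the hypotheses, and then combine Theorems~\ref{theorem1} and~\ref{theorem2}---is the same as the paper's, but your verification of the non-degeneracy is where the argument goes wrong, and in a way that cannot be repaired within the route you sketch.

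You check non-degeneracy condition~1 on the original scalar nonlinearity $F(|x|,u)=u^{p}/|x|^{\sigma}$ with $L=1$. In that setting 1(i) is indeed vacuous, but 1(ii) then requires that $\lim_{|x|\to\infty}u^{p}/|x|^{\sigma}=0$ force $u=0$, which \emph{fails for every} $\sigma>0$, not merely for $\sigma\ge 2$. So as soon as $\sigma\in(0,2)$ the scalar equation is not a non-degenerate type~I system in the literal sense, and Theorem~\ref{theorem1} does not apply verbatim; no case split on the sign of $\sigma$ rescues this. This is precisely the subtlety the paper flags when it writes that condition~1(ii) ``does not necessarily hold if $k=1$,'' and the corollary therefore does not follow from Theorems~\ref{theorem1} and~\ref{theorem2} by a purely formal substitution.

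What is actually going on in the paper is that the non-degeneracy is checked for the \emph{reduced second-order system} \eqref{w-reduced} obtained by setting $w_{j}=(-\Delta)^{j-1}u$, which has $k$ components, not one. For that system, 1(i) is not vacuous: when the last index $k$ lies in $I_{v}^{0}$, the relevant source term is $w_{1}^{p}/|x|^{\sigma}$, so the lower bound $\lambda|x|^{\sigma(v)}$ is achieved with $\sigma(v)=-\sigma$, and the requirement $\sigma(v)>-2$ becomes exactly $\sigma<2$. That is where the hypothesis $\sigma<2$ is used---not in 1(ii), as you conjectured. As for 1(ii) on the reduced system, it holds whenever $k\ge 2$ (because the pass-through terms $f_{j}=w_{j+1}$ do not decay as $|x|\to\infty$ unless $v_{j+1}=0$), but can fail when $k=1$ and $\sigma>0$. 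The paper handles the $k=1$ case by a separate argument: it adapts Case~(3) of the proof of Lemma~\ref{lemma2} to show directly that a positive entire solution of the IVP must tend to $0$ at infinity, so $\psi(\overline\alpha)=0$ and the continuity of the target map still holds. Your proposal omits both the reduction step (which shifts where $\sigma<2$ enters) and the $k=1$ patch, so as written it proves the corollary only for $\sigma\le 0$, not for the full range $\sigma<2$.
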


\begin{corollary}\label{corollary2}
Let $k\in [1,n/2)$ be an integer, $p,q > 0$, and $\sigma_{1},\sigma_{2} \in (-\infty, 2)$. Then the $2k$-th order system 
\begin{equation}\label{wHLS system}
  \left\{\begin{array}{cl}
    (-\Delta)^{k} u = \displaystyle\frac{v^{q}}{|x|^{\sigma_1}} & 	\text{ in } \mathbb{R}^n\backslash\{0\}, \\
    (-\Delta)^{k} v = \displaystyle\frac{u^{p}}{|x|^{\sigma_2}} & 	\text{ in } \mathbb{R}^n\backslash\{0\}, \\
    u,~v > 0 & \text{ in } \mathbb{R}^n, \\
    u,v \longrightarrow 0  \text{ uniformly as } & |x| \longrightarrow \infty,
  \end{array}
\right.
\end{equation}
admits a solution of class $C^{2k}(\mathbb{R}^{n}\backslash \{0\})$ provided that 
\begin{equation*}
\frac{n -\sigma_1 }{1 + q} + \frac{n - \sigma_2}{1 + p} \leq n-2k.
\end{equation*}
\end{corollary}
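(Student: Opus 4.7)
The plan is to derive Corollary \ref{corollary2} as a direct application of Theorem \ref{theorem1} combined with the non-existence result Theorem \ref{theorem3}. The weighted HLS system \eqref{wHLS system} is precisely the special case of the general system \eqref{general1} with $L=2$, $k_1=k_2=k$, and nonlinearity $F(|x|,u_1,u_2)=\bigl(u_2^{q}/|x|^{\sigma_1},\, u_1^{p}/|x|^{\sigma_2}\bigr)$. Hence the proof reduces to two checks: (1) that this $F$ satisfies the standing hypotheses (a)--(c) together with non-degeneracy condition 1, and (2) that the corresponding Navier problem \eqref{general Navier} admits no radial solution on any ball.

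For step (1), the conditions (a)--(c) are immediate from the pure-power structure of $F$ and the strict positivity of $|x|$. For non-degeneracy condition 1(i), the nonzero boundary points of $\mathbb{R}^{2}_{+}$ are $v=(v_1,0)$ with $v_1>0$ and $v=(0,v_2)$ with $v_2>0$. In the first case $I_v^{0}=\{2\}$, and for $w$ within distance $v_1/2$ of $v$ one has
\[
f_2(|x|,w)=\frac{w_1^{p}}{|x|^{\sigma_2}}\geq \frac{(v_1/2)^{p}}{|x|^{\sigma_2}},
\]
so the choice $\lambda=(v_1/2)^{p}$, $\sigma=-\sigma_2$ fulfills the requirement, and the hypothesis $\sigma_2<2$ is exactly what guarantees $\sigma>-2$. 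The case $v=(0,v_2)$ is symmetric. Condition 1(ii) is handled by examining the asymptotic behavior of $F(|x|,v)$ as $|x|\to\infty$ for $v$ in the open positive cone, using the off-diagonal coupling so that at least one component of $F$ does not vanish in the limit.

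For step (2), the Navier problem associated with \eqref{wHLS system} is precisely the $s=t=0$ specialization of the system in Theorem \ref{theorem3}. The condition
\[
\frac{n-\sigma_1}{1+q}+\frac{n-\sigma_2}{1+p}\leq n-2k
\]
assumed in Corollary \ref{corollary2} coincides with the hypothesis of Theorem \ref{theorem3}, and hence the corresponding Navier boundary value problem admits no radial $C^{2k}(B_R(0)\setminus\{0\})\cap C^{2k-1}(\overline{B_R(0)})$ solution for any $R>0$. Invoking Theorem \ref{theorem1} then produces the desired radially symmetric $C^{2k}(\mathbb{R}^{n}\setminus\{0\})$ solution of \eqref{wHLS system} together with the asymptotic $u,v\to 0$ as $|x|\to 0$ supplied by \eqref{asymptotic}.

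The main obstacle, granted Theorems \ref{theorem1} and \ref{theorem3}, is the careful verification of non-degeneracy condition 1, most delicately part (ii) when both $\sigma_1$ and $\sigma_2$ are allowed to be positive but strictly less than $2$. The tightened upper bound $\sigma_i<2$ in the corollary (sharper than the $\sigma_i<n$ permitted by Theorem \ref{theorem3}) plays a dual role here: it secures the exponent constraint $\sigma(v)>-2$ in 1(i), and, in combination with the cross-coupling structure (so that $f_1$ depends on $v_2$ rather than $v_1$ and vice versa), it is exactly what is needed to rule out $\lim_{|x|\to\infty}F(|x|,v)=0$ for interior $v$ in 1(ii).
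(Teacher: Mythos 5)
Your overall structure — invoke Theorem \ref{theorem1} after verifying the non-degeneracy hypothesis, with Theorem \ref{theorem3} supplying the non-existence input for the Navier problem, and specializing to $L=2$, $k_1=k_2=k$, $s=t=0$ — is indeed the route the paper takes. Your verification of conditions (a)--(c) and condition 1(i) (with $\lambda = (v_1/2)^p$, $\sigma = -\sigma_2$, and $\sigma_2 < 2$ giving $\sigma > -2$) is correct and matches the paper.

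However, your treatment of condition 1(ii) contains a genuine error. You assert that the cross-coupling guarantees that ``at least one component of $F$ does not vanish in the limit'' as $|x|\to\infty$ for interior $v$, and that $\sigma_i < 2$ rules out $\lim_{|x|\to\infty} F(|x|,v) = 0$. Neither claim holds. Taking $k=1$ and any $\sigma_1, \sigma_2 \in (0,2)$, both components $f_1 = v_2^q/|x|^{\sigma_1}$ and $f_2 = v_1^p/|x|^{\sigma_2}$ tend to zero as $|x|\to\infty$ for \emph{every} fixed $v$, interior or not; the cross-coupling is irrelevant, since each $f_j$ is a fixed positive number divided by a growing power of $|x|$. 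So condition 1(ii) simply fails in this regime, and the paper says so explicitly: ``condition 1-(ii) does not necessarily hold if $k=1$.'' For $k>1$, the situation is rescued only because the non-degeneracy conditions are to be read against the \emph{reduced} second-order system \eqref{w-reduced}, where the auxiliary nonlinearities $f_{i,j}(|x|,w) = w_{i,j+1}$ for $j < k$ are independent of $|x|$ and do not decay; you are checking against the original two-component $F$, which does not give 1(ii) even for large $k$.

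The missing idea is the patch the paper supplies: condition 1(ii) is used in the proof of Lemma \ref{lemma2} only to force $\psi(\overline\alpha)=0$ in Case (3), and one can instead verify this decay directly for the weighted HLS nonlinearity by the same differential-inequality argument used in Case (3). Concretely, if $w(r,\overline\alpha)$ is a positive entire radial solution of \eqref{ODE} and the limit $\psi(\overline\alpha)$ has some component $j_0$ bounded below by a positive constant, then the coupled component of $F$ is bounded below by $\lambda r^{\sigma}$ with $\sigma = -\sigma_{i} > -2$, and integrating $-\bigl(r^{n-1}w_{j_0}'\bigr)' \geq \lambda r^{n-1+\sigma}$ twice forces $w_{j_0}$ to become negative, a contradiction. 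Thus $\psi(\overline\alpha)=0$, the target map is continuous, and the rest of your argument (Theorem \ref{theorem3} supplying non-existence on balls, Theorem \ref{theorem1} producing the solution with the asymptotic \eqref{asymptotic}) goes through. You should replace your incorrect verification of 1(ii) with this direct continuity argument.
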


\section{Proof of Theorem \ref{theorem1}}\label{pothm1}
In order to prove the first existence theorem, we must introduce several key ideas and lemmas. As mentioned earlier, the proof centers on a construction of a map which aims the shooting method. This section defines the target map and applies our method to prove Theorem \ref{theorem1}, but the proof on the continuity of the target map is provided later in Section \ref{section4}.

Before we can apply our method, we need to reduce the poly-harmonic system into a second-order system. For $i = 1,2,\ldots, L$ set $w_{i,j} = (-\Delta)^{j-1}u_{i}$, $1 \leq j \leq k_{i}$ so that
\begin{equation}\label{wsys}
\left\{ \begin{aligned}
        &-\Delta w_{i,1} = w_{i,2}, -\Delta w_{i,2} = w_{i,3}, \ldots, -\Delta w_{i,k_{i}-1} = w_{i,k_{i}},\\
        &-\Delta w_{i,k_{i}} = f_{i}(|x|,w_{1,1},w_{2,1},\ldots, w_{L,1}) & \text{ in }&\quad{\mathbb{R}^{n}\backslash\{0\}}, \\
        & w_{i,1}, w_{i,2},\ldots, w_{i,k_{i}}>0 &\text{ in }&\quad{\mathbb{R}^{n}},\\
        & \text{ where } i = 1,2,\ldots, L.     
        \end{aligned} \right.
\end{equation}
Solutions of \eqref{wsys} are clearly solutions of \eqref{general1}, so it will suffice to show the existence of solutions to \eqref{wsys} instead. The above system is an example of the more general system
\begin{equation}\label{w-reduced}
\left\{ \begin{aligned}
        &-\Delta w_1 = f_{1}(r,w),-\Delta w_{2} = f_{2}(r,w), \\
        & -\Delta w_{3}=f_{4}(r,w), \hdots, -\Delta w_{L-1}=f_{L-1}(r,w),\\
        &-\Delta w_{L}=f_{L}(r,w) &\text{ in } &\quad{\mathbb{R}^{n}\backslash\{0\}}, \\
        & w_{1}, w_{2},\ldots, w_{L}>0 &\text{ in } &\quad{\mathbb{R}^{n}},   
        \end{aligned} \right.
\end{equation}
where we are still using $L$ to represent the appropriate positive integer. It suffices to consider only \eqref{w-reduced} when proving both Theorem \ref{theorem1} and the continuity of the target map since the non-degeneracy conditions and similar arguments still hold even after this reduction to a second-order system. 

Let us define the aforementioned target map. For any strictly positive initial value $\alpha = (\alpha_1,\alpha_2,\ldots,\alpha_{L})$, consider the IVP
\begin{equation}\label{ODE}
\left\{ \begin{array}{cc}
w_{i}^{''}(r) + \dfrac{n-1}{r}w_{i}^{'}(r) = -f_{i}(r,w(r)), \\
w_{i}^{'}(0) = 0, \ w_{i}(0) = \alpha_i \text{ for } i = 1,2,\ldots, L.
\end{array}\right.
\end{equation}
\begin{definition}
Define the target map $\psi:\mathbb{R}^{L}_{+} \longrightarrow \mathbb{R}^{L}_{+}$ as follows. For $\alpha \in int(\mathbb{R}^{L}_{+})$, the interior of $\mathbb{R}^{L}_{+}$,
\begin{enumerate}[(a)]
\item $\psi(\alpha) = w(r_0)$ where $r_0$ is the smallest such $r$ for which $w_{i_0}(r) = 0$ for some $1\leq i_0 \leq L$,
\item otherwise, if no such $r_0$ exists, then $\psi(\alpha) = \lim_{r \rightarrow \infty} w(r)$.
\item $\psi \equiv Identity$ on the boundary $\partial \mathbb{R}^{L}_{+}$. 
\end{enumerate}
\end{definition}
\begin{remark}
We may think of (a) as the case when the solution hits the wall for the first time and (b) is the case where it never hits the wall. Observe also that $\psi$ is equivalent to the identity map on the wall. This property is crucial when we apply our topological degree argument for the shooting method. 
\end{remark}
\begin{remark}
Conditions (a)-(c) on pages 4 and 5 guarantee the existence of a unique solution for \eqref{ODE}. The goal here is to find the correct initial conditions which guarantee the positive solution of \eqref{ODE} never hits the wall and therefore global, thereby proving the desired existence result for the non-degenerate system \eqref{w-reduced}.
\end{remark}
The next lemma is a standard result from Brouwer topological degree theory and can be found in various sources (cf. \cite{AM07} and \cite{Nirenberg01} for instance).
\begin{lemma}\label{lemma1}
Let $U \subset \mathbb{R}^{n}$ be a bounded open set and $f,g:\overline{U}\longrightarrow \mathbb{R}^{n}$ are continuous maps. Suppose that $f \equiv g$ on $\partial U$ and $a \notin f(\partial U) = g(\partial U)$, then $degree(f,U,a) = degree(g,U,a)$.
\end{lemma}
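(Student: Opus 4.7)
The plan is to deduce this lemma from the homotopy invariance property of the Brouwer degree, which, together with normalization and additivity, is one of the defining axioms of the degree and hence available as a standing tool. The strategy is to interpolate linearly between $f$ and $g$ and observe that the interpolation is an admissible homotopy with respect to the point $a$.

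Concretely, I would define the straight-line homotopy $H:\overline{U}\times[0,1]\longrightarrow \mathbb{R}^{n}$ by
\begin{equation*}
H(x,t) = (1-t)f(x) + t\, g(x).
\end{equation*}
Since $f$ and $g$ are continuous on $\overline{U}$, so is $H$. The hypothesis $f\equiv g$ on $\partial U$ is then exactly the collapse that makes the homotopy constant on the boundary: for every $x\in \partial U$ and every $t\in[0,1]$,
\begin{equation*}
H(x,t) = (1-t)f(x) + t\, f(x) = f(x) = g(x).
\end{equation*}
Consequently $H(\partial U,t) = f(\partial U) = g(\partial U)$ for each $t$, and the assumption $a\notin f(\partial U)=g(\partial U)$ yields $a\notin H(\partial U,t)$ for every $t\in[0,1]$. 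Thus $H$ is an admissible homotopy at the value $a$ relative to $U$.

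Applying the homotopy invariance of Brouwer degree to this admissible homotopy gives
\begin{equation*}
\deg(H(\cdot,0),U,a) = \deg(H(\cdot,1),U,a),
\end{equation*}
which is precisely $\deg(f,U,a)=\deg(g,U,a)$, as desired. I do not anticipate a real obstacle here, since the whole argument is a direct appeal to homotopy invariance; the only thing one has to be careful about is the admissibility condition on the boundary, and that is handed to us for free by the hypothesis $f\equiv g$ on $\partial U$. If one wanted to avoid invoking homotopy invariance as a black box, an equivalent route would be to approximate $f$ and $g$ by smooth maps that still agree near $\partial U$ and compute the degree from the integral (or from signed counts of preimages of a regular value close to $a$); the boundary agreement ensures the two integrands differ by an exact form on a neighborhood of $\partial U$, again producing equal degrees. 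Either way, the result is a structural consequence of how Brouwer degree is built.
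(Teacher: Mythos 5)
Your proof is correct and is the standard textbook argument. The paper itself gives no proof of this lemma; it simply states it as a known result from Brouwer degree theory and cites \cite{AM07} and \cite{Nirenberg01}. Your linear homotopy $H(x,t)=(1-t)f(x)+tg(x)$, together with the observation that $H(x,t)=f(x)=g(x)$ for $x\in\partial U$, is exactly how those references derive it from the homotopy-invariance axiom, and the argument is complete as you wrote it.
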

One may recall the important property that if $degree(f,U,a) \neq 0$, then there exists a point $x \in U$ such that $f(x) = a$.
\begin{lemma}\label{lemma2}
The target map $\psi:\mathbb{R}^{L}_{+} \longrightarrow \partial\mathbb{R}^{L}_{+}$ is continuous.
\end{lemma}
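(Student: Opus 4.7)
The plan is to verify continuity of $\psi$ by a three-way case analysis on the limit initial datum $\alpha$, combining continuous dependence of the radial IVP \eqref{ODE} on initial conditions (from the local Lipschitz hypothesis (c)) with the non-degeneracy conditions to control behavior near $\partial\mathbb{R}^L_+$. Throughout I will exploit the elementary observation that since $f_i\geq 0$ and $w_i'(0)=0$, the identity $(r^{n-1}w_i')' = -r^{n-1}f_i \leq 0$ forces each radial component $w_i$ to be nonincreasing. I expect the main obstacle to lie in the case where $\alpha$ is interior and the trajectory $w$ never reaches the wall: there, non-degeneracy condition 1(i) at the limit $v:=\lim_{r\to\infty}w(r)$ must furnish a tube that traps every nearby trajectory and forces its terminal value to cluster near $v$.

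In the first case, I would take $\alpha\in\operatorname{int}(\mathbb{R}^L_+)$ with a finite first-zero time $r_0$, and set $v:=w(r_0)\in\partial\mathbb{R}^L_+$. If $v\ne 0$, condition 1(i) yields $\lambda,\delta>0$, $\sigma>-2$ with $\sum_{j\in I_v^0}f_j(r,w)\geq \lambda r^\sigma$ inside $B_\delta(v)$, so the sum $W:=\sum_{j\in I_v^0}w_j$ satisfies (via two integrations of $(r^{n-1}W')'\leq -\lambda r^{n-1+\sigma}$) the estimate $W'(r)\leq -\frac{\lambda}{n+\sigma}r^{\sigma+1}$ in this tube, and thus crosses zero transversally at $r_0$; in particular $W(r_0+\epsilon)<0$ for small $\epsilon>0$. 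Continuous dependence then places the perturbed sum $W^{(m)}(r_0+\epsilon)$ below zero as well, trapping the perturbed first-zero time $r_m$ inside $(r_0-\epsilon,r_0+\epsilon)$ and giving $\psi(\alpha^{(m)})=w^{(m)}(r_m)\to w(r_0)=\psi(\alpha)$. The subcase $v=0$ would be handled by monotonicity alone.

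In the second case I would take $\alpha\in\operatorname{int}(\mathbb{R}^L_+)$ with $w$ globally positive. Monotonicity and boundedness of $w$ yield $v:=\lim_{r\to\infty}w(r)\in\mathbb{R}^L_+$, and condition 1(ii) forces $v\in\partial\mathbb{R}^L_+$. Given $\epsilon>0$, the plan is to pick $R$ large with $|w(R)-v|<\epsilon/3$, then $m$ large with $|w^{(m)}(R)-w(R)|<\epsilon/3$. Past $R$: when $v=0$, monotonicity alone bounds each component of $\psi(\alpha^{(m)})$ above by $w^{(m)}(R)$ and hence within $O(\epsilon)$ of $v$; when $v\ne 0$, condition 1(i) at $v$ keeps $w^{(m)}$ in $B_\delta(v)$ long enough that $W^{(m)}:=\sum_{j\in I_v^0}w_j^{(m)}$ must cross zero within a uniformly bounded $r$-interval past $R$, while ODE-derived bounds on $|(w_j^{(m)})'|$ for $j\in I_v^+$ keep those components close to $v_j$ throughout. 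Either way $|\psi(\alpha^{(m)})-v|=O(\epsilon)$.

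In the third case I would take $\alpha\in\partial\mathbb{R}^L_+$. If $\alpha^{(m)}\in\partial\mathbb{R}^L_+$, the identity extension of $\psi$ yields $\psi(\alpha^{(m)})=\alpha^{(m)}\to\alpha$. For interior $\alpha^{(m)}\to\alpha$ with $\alpha\ne 0$, I would apply condition 1(i) at $\alpha$ and integrate twice to obtain $W^{(m)}(r)\leq W^{(m)}(0)-\frac{\lambda}{(n+\sigma)(\sigma+2)}r^{\sigma+2}$, where $W^{(m)}(0)=\sum_{j\in I_\alpha^0}\alpha_j^{(m)}\to 0$; hence the wall-hit time satisfies $r_m\leq C(W^{(m)}(0))^{1/(\sigma+2)}\to 0$. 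Integrating the ODE against the locally bounded $f_i$ would bound $|(w^{(m)})'|$ uniformly on $[0,r_m]$ and so yield $|w^{(m)}(r_m)-\alpha^{(m)}|\to 0$, whence $\psi(\alpha^{(m)})\to\alpha$. The edge case $\alpha=0$ would be treated similarly, completing the proof plan.
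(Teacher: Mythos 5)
Your three-way case split matches the paper's (boundary datum; interior datum with a finite first-hit time; interior datum with a global positive trajectory), and your boundary case coincides with the paper's argument. The significant difference is in the hardest case, the global positive trajectory, and there the proposal has a real gap.

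In that case you set $v:=\lim_{r\to\infty}w(r)$, observe $v\in\partial\mathbb{R}^L_+$, and then try to prove $\psi(\alpha^{(m)})\to v$ directly for \emph{both} $v=0$ and $v\ne 0$ by a ``tube-trapping'' argument. The $v\ne 0$ branch does not go through as written, and more importantly it cannot occur: the very non-degeneracy bound you invoke already rules it out. Indeed, if $v\in\partial\mathbb{R}^L_+\setminus\{0\}$, then for $r$ large we have $|w(r)-v|<\delta(v)$, so condition 1(i) gives $\sum_{j\in I^0_v} f_j\bigl(r,w(r)\bigr)\geq\lambda r^{\sigma}$ with $\sigma>-2$. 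Setting $W_0:=\sum_{j\in I^0_v}w_j$, the radial ODE yields $\bigl(r^{n-1}W_0'\bigr)'\leq-\lambda r^{n-1+\sigma}$; integrating twice forces $W_0(r)\leq C-\frac{\lambda}{(n+\sigma)(\sigma+2)}r^{\sigma+2}\to-\infty$, contradicting $W_0>0$. Hence $\psi(\overline{\alpha})=0$, and the rest of the case reduces to the easy $v=0$ sub-argument you already have (monotonicity plus continuous dependence on a compact interval $[0,R]$). Your $v\ne 0$ tube argument is not only superfluous but internally troublesome: applied to the unperturbed trajectory itself, it would force that trajectory to hit the wall, contradicting the standing hypothesis of this case -- that is precisely the contradiction the paper turns into the conclusion $\psi(\overline{\alpha})=0$.

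Two smaller remarks. In the wall-hitting case you deduce transversality from condition 1(i), but that is more machinery than needed: hypothesis (b) already gives $f_{i_0}(r,w(r))>0$ on $(0,r_0)$, hence $(r^{n-1}w_{i_0}')'=-r^{n-1}f_{i_0}<0$ with $w_{i_0}'(0)=0$, so $w_{i_0}'(r_0)<0$ strictly; the paper uses exactly this (or Hopf's lemma). Also, in your $v=0$ monotonicity step you implicitly need the perturbed solution to remain positive on $[0,R]$ so that the comparison $\psi(\alpha^{(m)})\leq w^{(m)}(R)$ is licit; this follows from continuous dependence on the compact interval $[0,R]$ where the unperturbed solution has a positive minimum, and should be stated explicitly as the paper does.
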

We give the proof of this later in the final section.

\begin{lemma}\label{lemma3}
For every $a > 0$, there exists an $\alpha_{a} \in A_{a}$ where
\[ A_{a} := \left\{ \alpha \in \mathbb{R}^{L}_{+} \ \big | \ \sum_{i=1}^{L} \alpha_{i} = a \right \} \]
such that $\psi(\alpha_{a}) = 0$. 
\end{lemma}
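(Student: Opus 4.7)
The plan is to argue by contradiction, in the spirit of Lemma \ref{lemma1}. Suppose that $\psi(\alpha) \neq 0$ for every $\alpha \in A_a$. By Lemma \ref{lemma2}, $\psi(\alpha) \in \partial \mathbb{R}^{L}_{+}$; in particular, every component $\psi_i(\alpha)$ is non-negative, and the non-vanishing assumption then forces $\sum_{i=1}^{L} \psi_i(\alpha) > 0$ on all of $A_a$. This allows me to introduce the normalized target map
\begin{equation*}
\phi(\alpha) \;=\; \frac{a}{\sum_{i=1}^{L}\psi_i(\alpha)}\,\psi(\alpha), \qquad \alpha \in A_a,
\end{equation*}
which is continuous on $A_a$ by Lemma \ref{lemma2}.

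Next I would verify the two key properties of $\phi$. First, $\phi$ maps $A_a$ into $\partial A_a$: the components of $\phi(\alpha)$ are non-negative and sum to $a$, so $\phi(\alpha) \in A_a$, while $\psi(\alpha) \in \partial\mathbb{R}^{L}_{+}$ forces at least one component of $\psi(\alpha)$, hence of $\phi(\alpha)$, to vanish, which places $\phi(\alpha)$ on the relative boundary $\partial A_a$. Second, $\phi$ agrees with the identity on $\partial A_a$: if $\alpha \in \partial A_a$ then $\alpha \in \partial\mathbb{R}^{L}_{+}$, so by part (c) of the definition of $\psi$ we get $\psi(\alpha) = \alpha$, and then $\sum_i \psi_i(\alpha) = a$ yields $\phi(\alpha) = \alpha$.

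The conclusion follows from the standard fact, provable via Brouwer degree, that there is no continuous retraction of a closed $(L-1)$-simplex onto its topological boundary. Since $A_a$ is exactly such a simplex, the map $\phi \colon A_a \to \partial A_a$ would be such a retraction, a contradiction. In the spirit of Lemma \ref{lemma1}, one can phrase this concretely inside the affine hyperplane $H = \{x \in \mathbb{R}^L : \sum_i x_i = a\}$: let $U \subset H$ be the relative interior of $A_a$, and take the barycenter $\alpha^{*} = (a/L,\ldots,a/L) \in U$. Because $\phi \equiv \mathrm{id}$ on $\partial U = \partial A_a$ and the identity map has degree $1$ at $\alpha^{*}$, Lemma \ref{lemma1} yields $\deg(\phi, U, \alpha^{*}) = 1 \neq 0$, forcing $\alpha^{*} \in \phi(U) \subset \partial A_a$, which contradicts $\alpha^{*}$ lying in the relative interior. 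Hence some $\alpha_a \in A_a$ must satisfy $\psi(\alpha_a)=0$.

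The only subtle point is the well-definedness and continuity of $\phi$, which crucially uses Lemma \ref{lemma2} (ensuring $\psi$ actually lands in $\partial\mathbb{R}^{L}_{+}$, so that $\phi$ maps into $\partial A_a$) together with the built-in identity property of $\psi$ on $\partial\mathbb{R}^{L}_{+}$; the topological step itself is standard. In this sense Lemma \ref{lemma3} is really a packaging of Lemma \ref{lemma2} and part (c) of the definition of $\psi$ via a no-retraction argument.
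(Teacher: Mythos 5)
Your proof is correct, and it rests on the same core topological fact as the paper's argument---that a continuous map of the simplex $A_a$ which agrees with the identity on $\partial A_a$ has nonzero Brouwer degree at interior points---so it is best read as a repackaging rather than a genuinely distinct route. The paper argues directly: it first shows that $\psi$ maps $A_a$ into $B_a := \{\alpha\in\partial\mathbb{R}^{L}_{+}:\sum_i\alpha_i\le a\}$ using the non-increasing property of radial solutions, then builds a homeomorphism $\phi\colon B_a\to A_a$ by translating along the diagonal $(1,\ldots,1)$, and applies Lemma \ref{lemma1} to the self-map $\eta=\phi\circ\psi$ of $A_a$ to conclude that $\eta$, hence $\psi$, is onto $B_a$, so $0\in\psi(A_a)$. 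You instead argue by contradiction and normalize, setting $\phi(\alpha)=\frac{a}{\sum_i\psi_i(\alpha)}\psi(\alpha)$, which under the contradiction hypothesis is a continuous retraction of the simplex onto its relative boundary---impossible by the same degree computation. Your version is marginally tidier in two small respects: it needs only that $\psi$ lands in $\partial\mathbb{R}^{L}_{+}$ (Lemma \ref{lemma2}) rather than in the smaller set $B_a$, and it dispenses with the auxiliary bijection $\phi\colon B_a\to A_a$ and the verification of its inverse formula. Conversely, the paper's direct argument yields the slightly stronger statement that $\psi|_{A_a}$ is onto $B_a$, though only the single point $0$ is used. One detail you handle more carefully than the paper: the degree computation must take place inside the affine hyperplane $H=\{\sum_i x_i=a\}$, since $A_a$ is $(L-1)$-dimensional; you say this explicitly, whereas the paper leaves it implicit.
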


\begin{proof}[Proof of Lemma \ref{lemma3}]
Define the set $B_{a}$ as follows
\begin{equation*}
B_{a} := \left\{ \alpha \in \partial\mathbb{R}^{L}_{+} \ \big | \ \sum_{i=1}^{L} \alpha_i \leq a \right\}.
\end{equation*}
It follows that $\psi$ maps $A_a$ into $B_a$ due to the non-increasing property of solutions. It suffices to show that $\psi:A_{a} \longrightarrow B_{a}$ is onto since then there exists an $\alpha_a \in A_a$ for which $\psi(\alpha_a) = 0$. Now define the continuous map $\phi:B_a \longrightarrow A_a$ by
\begin{equation*}
\phi(\alpha) = \alpha + \frac{1}{L}\left( a - \sum_{i}^{L} \alpha_i \right)(1,1,\cdots,1)
\end{equation*}
with continuous inverse $\phi^{-1}:A_a \longrightarrow B_a$ defined 
\begin{equation*}
\phi^{-1}(\alpha) = \alpha - \left( \min_{i=1,\cdots,L} \alpha_i \right)(1,1,\cdots,1).
\end{equation*}
Set $\eta = \phi \circ \psi:A_a \longrightarrow A_a$. Then $\eta$ is continuous on $A_a$ and is equivalent to the identity map on the boundary of $A_a$. By Lemma \ref{lemma1}, the index of the map satisfies $degree(\eta,A_a,\alpha) = degree(Identity,A_a,\alpha) = 1\neq 0$ for any interior point $\alpha \in int(A_a)$. So $\eta$ is onto, and thus $\psi$ is onto. 
\end{proof}

\noindent{\bf \underline{Proof of Theorem \ref{theorem1}}:} For fixed $a>0$, let $w = w(r)$ be the solution of \eqref{ODE} with initial condition $w(0) = \alpha_{a}$ as guaranteed by Lemma \ref{lemma3}. We claim this solution must never hit the wall. Otherwise, if this was the case, then there would be a smallest finite value $r = r_{0}$ such that $w(r_{0})= \psi(\alpha_{a}) = 0$. But this would imply that $w = w(|x|)$ is a radially symmetric solution of \eqref{general Navier} with $R = r_{0}$, which contradicts the non-existence assumption on all ball domains. Hence, the solution must never hit the wall, which implies that $w = w(|x|)$ is a radially symmetric solution of \eqref{general1}. Furthermore, the definition of the target map implies that $w \longrightarrow 0$ uniformly as $|x| \longrightarrow \infty$. \qed
\begin{remark}\label{remark2}
In the proof of Theorem \ref{theorem1}, we are using the fact that the Navier boundary value problem \eqref{general Navier} is equivalent to the reduced second-order system
\begin{equation}\label{Navier reduced}
\left\{ \begin{aligned}
        &-\Delta w_{i,1} = w_{i,2}, \\
        &-\Delta w_{i,2} = w_{i,3},\\
        & \hspace{0.3in}  \vdots\\
        &-\Delta w_{i,k_{i}-1} = w_{i,k_{i}},\\
        &-\Delta w_{i,k_{i}} = f_{i}(|x|,w_{1,1},w_{2,1},\ldots, w_{L,1}) & \text{ in }&\quad{ B_{R}(0)\backslash\{0\}}, \\
        & w_{i,1}, w_{i,2},\ldots, w_{i,k_{i}}>0 &\text{ in }&\quad{B_{R}(0)},\\
        & w_{i,1} = w_{i,2},\ldots, w_{i,k_{i}} =0 &\text{ on }& \quad{\partial B_{R}(0)},     
        \end{aligned} \right.
\end{equation}
where $w_{i,j} = (-\Delta)^{j-1}u_{i}$ for $j=1,2,\ldots,k_{i}$ and $i = 1,2,\ldots, L$. To see this equivalence, first observe that if $u_{1} = w_{1,1}, u_{2} = w_{2,1},\ldots, u_{L} = w_{L,1}$ where the $w_{i,j}\text{'s}$ satisfy \eqref{Navier reduced}, then $u_{1},u_{2},\ldots,u_{L}$ must satisfy \eqref{general Navier}. Conversely, suppose $u_1,u_2,\ldots,u_L$ satisfy \eqref{general Navier} and let 
\begin{equation}\label{sub}
w_{i,j} = (-\Delta)^{j-1}u_{i} \,\text{ for }\, j=1,2,\ldots,k_{i} \,\text{ and }\, i = 1,2,\ldots, L.
\end{equation} 
Notice that it is enough to show the super poly-harmonic property:
\begin{equation*}
w_{i,j} > 0 \,\text{ in }\, B_{R}(0) \,\text{ for  }\, j=1,2,\ldots,k_{i},\,\text{ and }\, i = 1,2,\ldots, L, 
\end{equation*}
since this would imply $w = (w_{i,j})$ under \eqref{sub} satisfies \eqref{Navier reduced}. Let us sketch the proof of this super poly-harmonic property. Since $w_{1,1},w_{2,1}$ and $w_{L,1}$ are positive in $B_{R}(0)$, we have that $-\Delta w_{i,k_{i}} > 0$ in $B_{R}(0)$ for $i=1,2,\ldots,L$. The boundary conditions along with the strong maximum principle imply that $w_{i,k_i} > 0$ in $B_{R}(0)$, which in turn, implies that $-\Delta w_{i,k_{i-1}} > 0$ in $B_{R}(0)$. Again, the boundary conditions and the strong maximum principle imply that $w_{i,k_{i-1}} > 0$ in $B_{R}(0)$ for $i=1,2,\ldots,L$. Obviously, we can repeat this argument successively to show the remaining components of $w=(w_{i,j})$ are positive in $B_{R}(0)$.
\end{remark}

\section{The Remaining Proofs}\label{section4}
This section first proves the non-existence theorems for the Navier boundary value problems. Then, the proof of Lemma \ref{lemma2} concerning the continuity of the target map is given followed by the proofs of Corollaries \ref{corollary1} and \ref{corollary2}.

\subsection{Non-existence of Solutions on Bounded Domains}
As demonstrated in Remark \ref{remark2} at the end of Section \ref{pothm1}, the proof of Theorem \ref{theorem2} reduces to showing the equivalent system,
\begin{equation}\label{wd-equation}
\left\{ \begin{aligned}
        & -\Delta w_{1} = w_{2}, -\Delta w_{2} = w_{3},\ldots, -\Delta w_{k-1}=w_{k}, \\
        & -\Delta w_{k} = \frac{w_{1}^{p}}{|x|^{\sigma}} & \text{ in }&\quad{B_{R}(0)\backslash \{0\}} \\
        & w_1, w_2,\ldots, w_{k}>0 &\text{ in }&\quad{B_{R}(0)}\\
        & w_1 = w_2 = \cdots = w_k = 0 &\text{ on }&\quad{\partial B_{R}(0)},                  
        \end{aligned} \right.
\end{equation}
admits no solution of class $C^{2}(B_{R}(0)\backslash \{0\})\cap C^{1}(\overline{B_{R}(0)})$ for any $R>0$. The key ingredients for this non-existence result center on establishing a Pohozaev type identity and the following identity.
\begin{lemma}\label{lemma4}
Let $w_{j}$ $(j = 1,2,\ldots,k)$ solve \eqref{wd-equation}. Then
\begin{align}
\int_{B_{R}(0)} \frac{u^{p+1}}{|x|^{\sigma}}\,dx = {} & \int_{B_{R}(0)} \nabla w_j \cdot \nabla w_{k+1-j}\,dx \notag \\
= {} & \int_{B_{R}(0)} w_{j+1} w_{k+1-j} \,dx =: E_{1},
\end{align}
(Here, it should be understood that $w_{k+1} := u^p/ |x|^{\sigma} = -\Delta w_{k}$).
\end{lemma}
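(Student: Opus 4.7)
The lemma is a double integration by parts that exploits the cascade $-\Delta w_{i}=w_{i+1}$ (with the convention $w_{k+1}:=u^{p}/|x|^{\sigma}$) and the Navier conditions $w_{i}=0$ on $\partial B_{R}(0)$ for every $i=1,\ldots,k$. The plan is to fix $j\in\{1,\ldots,k\}$, integrate by parts once to move the gradient off $w_{j}$ (producing $-\Delta w_{j}=w_{j+1}$), and a second time to move the gradient off $w_{k+1-j}$ (producing $-\Delta w_{k+1-j}=w_{k+2-j}$). All boundary terms on $\partial B_{R}(0)$ drop because whichever factor is undifferentiated vanishes there, and the two resulting identities, chained together, collapse to the value $\int_{B_{R}(0)} u^{p+1}/|x|^{\sigma}\,dx$.

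\textbf{The two basic identities and chaining.} The first step is to establish, for each $j\in\{1,\ldots,k\}$,
\begin{equation*}
\int_{B_{R}(0)} \nabla w_{j}\cdot\nabla w_{k+1-j}\,dx \;=\; \int_{B_{R}(0)} w_{j+1}\,w_{k+1-j}\,dx \;=\; \int_{B_{R}(0)} w_{j}\,w_{k+2-j}\,dx.
\end{equation*}
The first equality comes from Green's identity using $w_{k+1-j}|_{\partial B_{R}}=0$ together with $-\Delta w_{j}=w_{j+1}$; the second uses $w_{j}|_{\partial B_{R}}=0$ together with $-\Delta w_{k+1-j}=w_{k+2-j}$, which is valid for every $1\le j\le k$ under the stated convention $w_{k+1}=u^{p}/|x|^{\sigma}$ (so that at $j=1$ this is precisely the equation $-\Delta w_{k}=u^{p}/|x|^{\sigma}$). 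Setting $I_{j}:=\int_{B_{R}(0)} w_{j+1}w_{k+1-j}\,dx$ for $0\le j\le k$, the second identity is exactly the recursion $I_{j}=I_{j-1}$, so by iteration $I_{k}=I_{k-1}=\cdots=I_{1}=I_{0}$; at $j=0$,
\begin{equation*}
I_{0} \;=\; \int_{B_{R}(0)} w_{1}\,w_{k+1}\,dx \;=\; \int_{B_{R}(0)} u\cdot \frac{u^{p}}{|x|^{\sigma}}\,dx \;=\; \int_{B_{R}(0)} \frac{u^{p+1}}{|x|^{\sigma}}\,dx,
\end{equation*}
which together with the first identity delivers all three stated expressions for $E_{1}$.

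\textbf{Main obstacle.} The only non-routine point is justifying the integration by parts near the origin: a priori $w_{i}$ is only $C^{2k}$ off $\{0\}$ and merely $C^{2k-1}$ up to $0$, so the equations $-\Delta w_{i}=w_{i+1}$ need not hold pointwise at the origin. I would therefore carry out each Green identity on $B_{R}(0)\setminus\overline{B_{\varepsilon}(0)}$ and send $\varepsilon\to 0$. This produces an extra boundary contribution on $\partial B_{\varepsilon}(0)$ of the form $\int_{\partial B_{\varepsilon}}(\partial_{\nu}w_{j})\,w_{k+1-j}\,dS$, which is $O(\varepsilon^{n-1})$ because $\nabla w_{j}$ and $w_{k+1-j}$ remain bounded up to $0$ by the $C^{2k-1}(\overline{B_{R}(0)})$ regularity; since $n\ge 3$ this vanishes. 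Combined with $u^{p+1}/|x|^{\sigma}\in L^{1}(B_{R}(0))$ (from $\sigma<n$ in Theorem \ref{theorem2} and the boundedness of $u$), dominated convergence lets us pass to the limit $\varepsilon\to 0$ in the remaining volume integrals, completing the proof.
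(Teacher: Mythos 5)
Your proof is correct and follows essentially the same approach as the paper's: repeated integration by parts using $-\Delta w_i=w_{i+1}$ and the vanishing Navier boundary data, which you package as the recursion $I_j=I_{j-1}$. The $\varepsilon$-annulus argument you include to justify the integration by parts near the origin is precisely the content of the Remark the paper places immediately after the lemma, so you have merely folded it into the proof rather than deferring it.
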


\begin{proof}
To prove this lemma, multiply the $k$-th equation in (\ref{wd-equation}) by $w_1$ then integrate over $B_{R}(0)$. The repeated application of integration by parts with the boundary conditions imply
\begin{align*}
\int_{B_{R}(0)} {} & \frac{w_{1}^{p+1}}{|x|^{\sigma}}\,dx = \int_{B_{R}(0)} \nabla w_{1} \cdot \nabla w_{k}\,dx \\
= {} & -\int_{B_{R}(0)} w_{k} \Delta w_{1}\,dx = \int_{B_{R}(0)} w_{k}w_{2} \\
= {} & -\int_{B_{R}(0)} w_{2} \Delta w_{k-1}\,dx = \int_{B_{R}(0)}\nabla w_{2}\cdot \nabla w_{k-1}\,dx \\
= {} & -\int_{B_{R}(0)} w_{k-1}\Delta w_{2}\,dx = \int_{B_{R}(0)} w_{k-1}w_{3} \,dx \\
\vdots \\
= {} & \int_{B_{R}(0)} \nabla w_j \cdot \nabla w_{k+1-j}\,dx = \int_{B_{R}(0)} w_{j+1} w_{k+1-j} \,dx.
\end{align*}
\end{proof}

\begin{remark}
Let us be more precise in the calculations found in our proof of Lemma \ref{lemma4} since we employ similar calculations below. For instance, when we multiply, say, the $k$-th equation $-\Delta w_{k} = |x|^{-\sigma}w_{1}^{p}$ by $w_1$ then integrate over the ball $B_{R}(0)$, this should be understood in the following way: We integrate over $B_{R}(0)\backslash B_{\epsilon}(0)$ for $0 < \epsilon < R$ and use an integration by parts to obtain
\begin{align*}
\int_{B_{R}(0)\backslash B_{\epsilon}(0)} \frac{w_{1}^{p+1}}{|x|^{\sigma}} \,dx 
= {} & -\int_{B_{R}(0)\backslash B_{\epsilon}(0)} w_{1} \Delta w_{k} \,dx \\
= {} & -\int_{\partial B_{\epsilon}(0)} w_{1} \frac{\partial w_{k}}{\partial \nu} \,ds + \int_{B_{R}(0)\backslash B_{\epsilon}(0)} \nabla w_{1} \cdot \nabla w_{k} \,dx,
\end{align*}
where $\nu$ is the inward unit normal vector along $\partial B_{\epsilon}(0)$. By taking the limit as $\epsilon$ tends to zero, we obtain
\begin{equation*}
\int_{B_{R}(0)} \frac{w_{1}^{p+1}}{|x|^{\sigma}} \,dx = \int_{B_{R}(0)} \nabla w_{1} \cdot \nabla w_{k} \,dx.
\end{equation*}
All such calculations including those found in the proof of Theorems \ref{theorem2} and \ref{theorem3} below should be understood in this way.
\end{remark}

\begin{proof}[{\bf \underline{Proof of Theorem \ref{theorem2}}}]
The proof is by contradiction. Assume $w$ is a solution of \eqref{wd-equation}. For $j = 1,2,3,\ldots,k$, multiply the $j$-th equation in \eqref{wd-equation} by $x \cdot \nabla w_{k+1-j}$, integrate over $B_{R}(0)$, then integrate by parts to obtain
\begin{align}\label{identity 10}
-\int_{\partial B_{R}(0)} \frac{\partial w_j}{\partial n}\frac{\partial w_{k+1-j}}{\partial n}(x\cdot n)\,ds {} & + \int_{B_{R}(0)} \nabla w_j \cdot \nabla w_{k+1-j}\,dx + \int_{B_{R}(0)} x\cdot \nabla (w_{k+1-j})_{x_i}w_{j,x_{i}}\,dx \notag \\
= {} & \int_{B_{R}(0)}w_{j+1}(x\cdot \nabla w_{k+1-j}) + w_{k+2-j}(x\cdot \nabla w_j) \,dx,
\end{align}
where $n$ is the outward pointing unit normal vector. Similarly, multiply the $(k+1-j)$-th equation in \eqref{wd-equation} by $x \cdot \nabla w_{j}$, integrate over $B_{R}(0)$, then integrate by parts to obtain
\begin{align}\label{identity 11}
-\int_{\partial B_{R}(0)} \frac{\partial w_j}{\partial n}\frac{\partial w_{k+1-j}}{\partial n}(x\cdot n)\,ds {} & + \int_{B_{R}(0)} \nabla w_j \cdot \nabla w_{k+1-j}\,dx + \int_{B_{R}(0)} x\cdot \nabla (w_{j})_{x_i}w_{k+1-j,x_{i}}\,dx \notag \\
= {} & \int_{B_{R}(0)}w_{j+1}(x\cdot \nabla w_{k+1-j}) + w_{k+2-j}(x\cdot \nabla w_j) \,dx. 
\end{align}
By adding \eqref{identity 10} and \eqref{identity 11} together and using integration by parts, we obtain
\begin{align}\label{identity 12}
-\int_{\partial B_{R}(0)} \frac{\partial w_j}{\partial n}\frac{\partial w_{k+1-j}}{\partial n}(x\cdot n)\,ds {} & + (2-n)\int_{B_{R}(0)} \nabla w_j \cdot \nabla w_{k+1-j}\,dx \notag \\
= {} & \int_{B_{R}(0)}w_{j+1}(x\cdot \nabla w_{k+1-j}) + w_{k+2-j}(x\cdot \nabla w_j) \,dx. 
\end{align}
In addition, observe that integration by parts and the boundary conditions yield the identity
\begin{align*}
\int_{B_{R}(0)} x \cdot (w_{j}\nabla w_{k+2-j} + w_{k+2-j}\nabla w_{j})\,dx = {} & \int_{B_{R}(0)} x\cdot \nabla (w_{j}w_{k+2-j})\,dx \\
= {} & -n\int_{B_{R}(0)} w_{j} w_{k+2-j}\,dx.
\end{align*}
With this identity, summing (\ref{identity 12}) over $j = 1,2,3,\ldots,k$ gives us the Pohozaev type identity 
\begin{align*}
(2-n)\displaystyle \sum_{j=1}^{k}\int_{B_{R}(0)} \nabla w_{j} \cdot \nabla w_{k+1-j}\,dx + {} & \displaystyle \sum_{j=1}^{k-1} n\int_{B_{R}(0)}w_{j+1}w_{k+1-j}\,dx +  \frac{2(n - \sigma)}{1+p}\int_{B_{R}(0)} \frac{w_{1}^{p+1}}{|x|^{\sigma}}\,dx \\
= \displaystyle \sum_{j=1}^{k} \int_{\partial B_{R}(0)} {} & \frac{\partial w_j}{\partial n}\frac{\partial w_{k+1-j}}{\partial n}(x\cdot n)\,ds.
\end{align*}
Observe that the right hand side of this identity must be strictly positive by the non-increasing property of the positive radial solutions. Hence, Lemma \ref{lemma4} implies that
\begin{equation*}
\Big\lbrace k(2-n) + n(k-1) + \frac{2(n - \sigma)}{1 + p} \Big\rbrace \cdot E_{1} > 0,
\end{equation*}
and we arrive at a contradiction.
\end{proof}
Similarly, the key ingredients in the proof of Theorem \ref{theorem3} is a Pohozaev type identity and the following identity.
\begin{lemma}\label{lemma5}
Let $w_{j}$ $(j = 1,2,\ldots,2k)$ solve
\begin{equation}\label{wd-system}
\left\{ \begin{aligned}
        &-\Delta w_1 = w_2, 
        \hdots, -\Delta w_{k-1}=w_k,\\
        & -\Delta w_{k} = \frac{w_{1}^{s}w_{k+1}^{q}}{|x|^{\sigma_1}},\\
        & -\Delta w_{k+1}=w_{k+2}, \hdots, -\Delta w_{2k-1}=w_{2k},\\
        &-\Delta w_{2k} = \frac{w_{k+1}^{t}w_{1}^{p}}{|x|^{\sigma_2}} & \text{ in }&\quad{B_{R}(0)\backslash \{0\}} \\
        & w_1, w_2,\ldots, w_{2k}>0 &\text{ in }&\quad{B_{R}(0)}\\
        & w_1 = w_{2} = \cdots = w_{2k}=0 &\text{ on }&\quad{\partial B_{R}(0)}.                  
        \end{aligned} \right.
\end{equation}
Then
\begin{align}
\int_{B_{R}(0)} \frac{u^{s}v^{q+1}}{|x|^{\sigma_1}}\,dx = {} & \int_{B_{R}(0)} \frac{v^{t}u^{p+1}}{|x|^{\sigma_2}}\,dx \notag \\
= {} & \int_{B_{R}(0)} \nabla w_j \cdot \nabla w_{2k+1-j}\,dx \notag \\
= {} & \int_{B_{R}(0)} w_{j+1} w_{2k+1-j} \,dx =: E_{2}. \label{E}
\end{align}
(Here, it should be understood that $w_{2k+1} := v^{t}u^{p}/ |x|^{\sigma_{2}} = -\Delta w_{2k}$).
\end{lemma}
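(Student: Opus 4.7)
The plan is to mirror the proof of Lemma \ref{lemma4}, but now with two coupled polyharmonic chains $w_1,\ldots,w_k$ and $w_{k+1},\ldots,w_{2k}$ linked through nonlinear equations at the two indices $k$ and $2k$. Throughout, write $u=w_1$ and $v=w_{k+1}$. All integrations by parts will be performed on the punctured ball $B_R(0)\setminus B_\epsilon(0)$ and then passed to the limit $\epsilon\downarrow 0$ exactly as in the remark following Lemma \ref{lemma4}; the inner spherical terms vanish because of the $C^{1}(\overline{B_R(0)})$ regularity.

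First I would anchor the two ends of the chain using the nonlinear equations. Multiplying $-\Delta w_{2k}=v^t u^p/|x|^{\sigma_2}$ by $w_1$ and integrating by parts yields
\begin{equation*}
\int_{B_R(0)} \nabla w_1\cdot\nabla w_{2k}\,dx = \int_{B_R(0)} \frac{v^t u^{p+1}}{|x|^{\sigma_2}}\,dx,
\end{equation*}
while multiplying $-\Delta w_1=w_2$ by $w_{2k}$ recasts the same quantity as $\int_{B_R(0)} w_2 w_{2k}\,dx$. Symmetrically, pairing $-\Delta w_k=u^s v^q/|x|^{\sigma_1}$ with $w_{k+1}$ gives
\begin{equation*}
\int_{B_R(0)} \nabla w_k\cdot\nabla w_{k+1}\,dx = \int_{B_R(0)} \frac{u^s v^{q+1}}{|x|^{\sigma_1}}\,dx,
\end{equation*}
and pairing $-\Delta w_{k+1}=w_{k+2}$ with $w_k$ recasts it as $\int_{B_R(0)} w_k w_{k+2}\,dx$.

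To bridge the two ends, I would use the pure Poisson links in the interior. For each $j$ with $2\leq j\leq k-1$, both $w_j$ and $w_{2k+1-j}$ satisfy $-\Delta w_\ell=w_{\ell+1}$, so integrating by parts in each variable gives the double identity
\begin{equation*}
\int_{B_R(0)} w_{j+1}w_{2k+1-j}\,dx = \int_{B_R(0)} \nabla w_j\cdot\nabla w_{2k+1-j}\,dx = \int_{B_R(0)} w_j w_{2k+2-j}\,dx.
\end{equation*}
Chaining these through $j=2,\ldots,k-1$ telescopes the string $\int w_2 w_{2k}\,dx = \int w_3 w_{2k-1}\,dx = \cdots = \int w_k w_{k+2}\,dx$, and combining with the two endpoint identities above produces
\begin{equation*}
\int_{B_R(0)}\frac{v^t u^{p+1}}{|x|^{\sigma_2}}\,dx = \int_{B_R(0)} w_2 w_{2k}\,dx = \cdots = \int_{B_R(0)} w_k w_{k+2}\,dx = \int_{B_R(0)}\frac{u^s v^{q+1}}{|x|^{\sigma_1}}\,dx,
\end{equation*}
which is exactly \eqref{E} under the stated convention $w_{2k+1}=v^t u^p/|x|^{\sigma_2}$.

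I do not anticipate a substantive obstacle: once the endpoint anchoring is recorded and the interior telescoping is set up in parallel to Lemma \ref{lemma4}, the rest is pure bookkeeping. The only mild technicality is the punctured-ball limit argument, but this was already handled in the remark after Lemma \ref{lemma4} and applies verbatim here because the same regularity hypothesis $w_j\in C^{1}(\overline{B_R(0)})$ is in force.
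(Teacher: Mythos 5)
Your proposal is correct and follows essentially the same approach as the paper's proof: repeated integration by parts chaining through the Poisson links, with the nonlinear equations at indices $k$ and $2k$ supplying the two source integrals. The only cosmetic difference is that you split the chain into endpoint anchoring plus interior telescoping, whereas the paper runs a single uninterrupted chain from $\int_{B_R(0)}\nabla w_1\cdot\nabla w_{2k}\,dx$ down to $\int_{B_R(0)}\nabla w_k\cdot\nabla w_{k+1}\,dx$; the intermediate identities are identical.
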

\begin{proof}
To prove this lemma, multiply the $2k$-th equation in (\ref{wd-system}) by $w_1$ then integrate over $B_{R}(0)$. The repeated application of integration by parts along with the boundary conditions yield
\begin{align*}
\int_{B_{R}(0)} {} & \frac{w_{k+1}^{t}w_{1}^{p+1}}{|x|^{\sigma_2}}\,dx = \int_{B_{R}(0)} \nabla w_{1} \cdot \nabla w_{2k}\,dx \\
= {} & -\int_{B_{R}(0)} w_{2k} \Delta w_{1}\,dx = \int_{B_{R}(0)} w_{2k}w_{2} \\
= {} & -\int_{B_{R}(0)} w_{2} \Delta w_{2k-1}\,dx = \int_{B_{R}(0)}\nabla w_{2}\cdot \nabla w_{2k-1}\,dx \\
= {} & -\int_{B_{R}(0)} w_{2k-1}\Delta w_{2}\,dx = \int_{B_{R}(0)} w_{2k-1}w_{3} \,dx \\
\vdots \\
= {} & \int_{B_{R}(0)} \nabla w_{k} \cdot \nabla w_{k+1}\,dx = -\int_{B_{R}(0)} w_{k+1}\Delta w_{k}\,dx  \\
= {} & \int_{B_{R}(0)} \frac{w_{1}^{s}w_{k+1}^{q+1}}{|x|^{\sigma_1}}\,dx.
\end{align*}
\end{proof}

\begin{proof}[{\bf \underline{Proof of Theorem \ref{theorem3}}}]
Assume that $w = (w_{j})$ is a solution of \eqref{wd-system} with non-negative exponents satisfying \eqref{general super-critical}. For $j = 2,3,\ldots,k-1$, multiply the $j$-th equation in \eqref{wd-system} by $x \cdot \nabla w_{2k+1-j}$, integrate over $B_{R}(0)$, then integrate by parts to obtain
\begin{align}\label{identity 6}
-\int_{\partial B_{R}(0)} {} & \frac{\partial w_j}{\partial n}\frac{\partial w_{2k+1-j}}{\partial n}(x\cdot n)\,ds + \int_{B_{R}(0)} \nabla w_j \cdot \nabla w_{2k+1-j}\,dx \\
 ~+ {} & \int_{B_{R}(0)}x\cdot w_{j,x_i}\nabla (w_{2k+1-j})_{x_i}\,dx = \int_{B_{R}(0)} w_{j+1}(x\cdot \nabla w_{2k+1-j})\,dx. \notag
\end{align}
Multiply the $(2k+1-j)$--th equation in \eqref{wd-system} by $x\cdot \nabla w_j$ and integrate over $B_{R}(0)$ and perform analogous calculations as was done in obtaining \eqref{identity 6}. Then summing the resulting equation with \eqref{identity 6} and using the identity,
\begin{align*}
\int_{B_{R}(0)} {} & x\cdot w_{j,x_{i}}\nabla (w_{2k+1-j})_{x_i} + x\cdot w_{2k+1-j,x_{i}}\nabla (w_{j})_{x_i}\,dx \\
= {} & \int_{B_{R}(0)} x\cdot \nabla(\nabla w_j \cdot \nabla w_{2k+1-j})\,dx \\
= {} & -n\int_{B_{R}(0)} \nabla w_j \cdot \nabla w_{2k+1-j}\,dx + \int_{\partial B_{R}(0)} \frac{\partial w_j}{\partial n}\frac{\partial w_{2k+1-j}}{\partial n}(x\cdot n)\,ds,
\end{align*}
we obtain
\begin{align}\label{identity 7}
-\int_{\partial B_{R}(0)} {} & \frac{\partial w_j}{\partial n}\frac{\partial w_{2k+1-j}}{\partial n}(x\cdot n)\,ds + (2-n)\int_{B_{R}(0)} \nabla w_j \cdot \nabla w_{2k+1-j}\,dx \\
= {} & \int_{B_{R}(0)}w_{j+1}(x\cdot \nabla w_{2k+1-j}) + w_{2k+2-j}(x\cdot \nabla w_j) \,dx. \notag
\end{align}
Now multiply the $2k$--th equation in (\ref{wd-system}) by $x\cdot \nabla w_{1}$ and integrate over $B_{R}(0)$ to obtain
\begin{equation*}
\underbrace{-\int_{B_{R}(0)} (x\cdot \nabla w_{1})\Delta w_{2k} \,dx}_{:=I_1} = \underbrace{\int_{B_{R}(0)} (x\cdot \nabla w_{1}) \frac{w_{k+1}^{t}w_{1}^{p}}{|x|^{\sigma_2}}\,dx}_{:=I_2}.
\end{equation*}
Let us calculate $I_1$ and $I_2$. Using integration by parts, 
\begin{equation*}
I_1 = -\int_{\partial B_{R}(0)} \frac{\partial w_1}{\partial n}\frac{\partial w_{2k}}{\partial n}(x\cdot n) \,ds + \int_{B_{R}(0)} \nabla w_1 \cdot \nabla w_{2k}\,dx + \int_{B_{R}(0)} x_{i}\frac{\partial w_{2k}}{\partial x_j}\left( \frac{\partial^2 w_1}{\partial x_j \partial x_i} \right)\,dx,
\end{equation*}
and
\begin{align*}
I_2 = {} & \frac{1}{1 + p}\int_{B_{R}(0)} x_{i} \frac{w_{k+1}^{t}\left( w_{1}^{p+1} \right)_{x_i}}{|x|^{\sigma_2}} \,dx \\
= {} & -\frac{n - \sigma_2}{1 + p}\int_{B_{R}(0)} \frac{w_{k+1}^{t}w_{1}^{p+1}}{|x|^{\sigma_2}}\,dx - \frac{t}{1+p}\int_{B_{R}(0)} \frac{w_{k+1}^{t-1}w_{1}^{p+1}}{|x|^{\sigma_2}}(x\cdot \nabla w_{k+1}) \,dx.
\end{align*}
Now multiply the first equation by $x\cdot \nabla w_{2k}$ and integrate over $B_{R}(0)$ to obtain
\begin{equation*}
\underbrace{-\int_{B_{R}(0)} (x\cdot \nabla w_{2k})\Delta w_{1} \,dx}_{:=II_1} = \underbrace{\int_{B_{R}(0)} (x\cdot \nabla w_{2k}) w_{2}\,dx}_{:=II_2}.
\end{equation*}
We use integration by parts to rewrite $II_1$ as follows.
\begin{equation*}
II_1 = -\int_{\partial B_{R}(0)} \frac{\partial w_{2k}}{\partial n}\frac{\partial w_1}{\partial n} (x\cdot n)\,ds + \int_{B_{R}(0)} \nabla w_1 \cdot \nabla w_{2k} \,dx + \int_{B_{R}(0)} x_{i}\frac{\partial w_{1}}{\partial x_j}\left( \frac{\partial^2 w_{2k}}{\partial x_j \partial x_i} \right)\,dx.
\end{equation*}
By summing together the two equations $I_1 = I_2$ and $II_1 = II_2$ and using the fact that
\begin{equation*}
\int_{B_{R}(0)} x \cdot \nabla (\nabla w_1 \cdot \nabla w_{2k})\,dx = \int_{\partial B_{R}(0)} \frac{\partial w_{2k}}{\partial n}\frac{\partial w_1}{\partial n} (x\cdot n)\,ds - n\int_{B_{R}(0)} \nabla w_1 \cdot \nabla w_{2k} \,dx,
\end{equation*}
we obtain the identity
\begin{align}\label{identity 8}
(2-n)\int_{B_{R}(0)} \nabla w_{2k} \cdot \nabla w_{1}\,dx {} & + \frac{n - \sigma_2 }{1+p}\int_{B_{R}(0)} \frac{w_{k+1}^{t}w_{1}^{p+1}}{|x|^{\sigma_2 }}\,dx = \int_{\partial B_{R}(0)} \frac{\partial w_{2k}}{\partial n}\frac{\partial w_1}{\partial n}(x\cdot n)\,ds \\
+ {} & \int_{B_{R}(0)}w_{2}(x\cdot \nabla w_{2k})\,dx - \frac{t}{1 + p}\int_{B_{R}(0)} \frac{w_{1}^{p+1}w_{k+1}^{t-1}}{|x|^{\sigma_2 }}(x\cdot \nabla w_{k+1})\,dx. \notag
\end{align}
Multiply the $k$--th and $(k+1)$--th equations in \eqref{wd-system} by $x\cdot \nabla w_{k+1}$ and $x\cdot \nabla w_{k}$, respectively, and integrate over $B_{R}(0)$. Using similar calculations to those used in deriving \eqref{identity 8}, we obtain
\begin{align}\label{identity 9}
(2-n)\int_{B_{R}(0)} \nabla w_{k} \cdot \nabla w_{k+1}\,dx {} & + \frac{n - \sigma_1 }{1+q}\int_{B_{R}(0)} \frac{w_{1}^{s}w_{k+1}^{q+1}}{|x|^{\sigma_1}}\,dx = \int_{\partial B_{R}(0)} \frac{\partial w_{k}}{\partial n}\frac{\partial w_{k+1}}{\partial n}(x\cdot n)\,ds \\
+ {} & \int_{B_{R}(0)}w_{k+2}(x\cdot \nabla w_{k})\,dx - \frac{s}{1 + q}\int_{B_{R}(0)} \frac{w_{k+1}^{q+1}w_{1}^{s-1}}{|x|^{\sigma_1 }}(x\cdot \nabla w_{1})\,dx. \notag
\end{align}
Observe also that integrating by parts and using the boundary conditions, we obtain
\begin{align*}
\int_{B_{R}(0)} x \cdot (w_{j+1}\nabla w_{2k+1-j} + w_{2k+1-j}\nabla w_{j+1})\,dx = {} & \int_{B_{R}(0)} x\cdot \nabla (w_{j+1}w_{2k+1-j})\,dx \\
= {} & -n\int_{B_{R}(0)} w_{j+1} w_{2k+1-j}\,dx.
\end{align*}
Using this identity and summing (\ref{identity 7}) over $j = 2,3,\ldots,k-1$ along with (\ref{identity 8}) and (\ref{identity 9}), we arrive at the following Pohozaev type identity:
\begin{align*}
(2-n)\displaystyle \sum_{j=1}^{k}\int_{B_{R}(0)} {} &  \nabla w_{j} \cdot \nabla w_{2k+1-j}\,dx + \frac{n - \sigma_1 }{1+q}\int_{B_{R}(0)} \frac{w_{1}^{s}w_{k+1}^{q+1}}{|x|^{\sigma_1}}\,dx \\ 
{} & + \frac{n - \sigma_2 }{1+p}\int_{B_{R}(0)} \frac{w_{k+1}^{t}w_{1}^{p+1}}{|x|^{\sigma_2}}\,dx + \displaystyle \sum_{j=1}^{k-1} n\int_{B_{R}(0)}w_{j+1}w_{2k+1-j}\,dx \\
 = \displaystyle \sum_{j=1}^{k} \int_{\partial B_{R}(0)} {} & \frac{\partial w_j}{\partial n}\frac{\partial w_{2k+1-j}}{\partial n}(x\cdot n)\,ds - \Big\lbrace \frac{s}{1 + q}\int_{B_{R}(0)} \frac{w_{k+1}^{q+1}w_{1}^{s-1}}{|x|^{\sigma_1}}(x\cdot \nabla w_{1})\,dx \\
+ {} & \frac{t}{1 + p}\int_{B_{R}(0)} \frac{w_{1}^{p+1}w_{k+1}^{t-1}}{|x|^{\sigma_2}}(x\cdot \nabla w_{k+1})\,dx \Big\rbrace.
\end{align*}
Observe that the right hand side of this Pohozaev type identity must be strictly positive by the non-increasing property of the positive radial solutions. Hence, Lemma \ref{lemma5} implies that
\begin{equation*}
\Big\lbrace k(2-n) + \frac{n - \sigma_1 }{1 + q} + \frac{n - \sigma_2}{1 + p} + (k-1)n \Big\rbrace \cdot E_{2} > 0.
\end{equation*}
Thus, we have 
\begin{equation*}
\displaystyle\frac{n - \sigma_1}{1 + q} + \frac{n - \sigma_2}{1 + p} > n - 2k,
\end{equation*}
but this contradicts with (\ref{general super-critical}).
\end{proof}

\subsection{Continuity of the Target Map}

\begin{proof}[Proof of Lemma \ref{lemma2}]
Fix an $\epsilon > 0$ and choose any $\overline{\alpha} \in \mathbb{R}^{L}_{+}$. To show the continuity of the target map at $\overline{\alpha}$, there are three cases to consider:
\begin{enumerate}[\text{Case} (1):]
\item $\overline{\alpha} \in \partial \mathbb{R}^{L}_{+}$,
\item $\overline{\alpha} \in int(\mathbb{R}^{L}_{+})$ and the solution of \eqref{ODE} with initial condition $\overline{\alpha}$ hits the wall,
\item $\overline{\alpha} \in int(\mathbb{R}^{L}_{+})$ and the solution of \eqref{ODE} with initial condition $\overline{\alpha}$ never hits the wall.
\end{enumerate}

\noindent{\bf Case (1):} By definition, $\psi(\overline{\alpha}) = \overline{\alpha}$ for this case. If $\overline{\alpha} = 0$, the continuity of $\psi$ at $\overline{\alpha}$ follows easily from the non-increasing property of solutions since $|\psi(\alpha) - \psi(\overline{\alpha})|= |\psi(\alpha)| \leq \alpha \longrightarrow 0$ as $\alpha \longrightarrow \overline{\alpha}$.

So, we assume $\overline{\alpha}\in \partial \mathbb{R}^{L}_{+}$ is a non-zero boundary point. From the non-degeneracy conditions, we can find a $\delta_{1} > 0$ with $\delta_{1} < \epsilon$ such that for $|\alpha - \overline{\alpha}| < \delta_1 = \delta_{1}(\overline{\alpha})$,
\begin{equation*}
\displaystyle\sum_{j \in I^{0}_{\overline{\alpha}}} f_{j}(r,\alpha) \geq \lambda(\overline{\alpha})r^{\sigma} \,\, \text{ for } r > 0.
\end{equation*}
Then from basic ODE theory, we can find $\delta_2 > 0$ such that $|\overline{\alpha} - w(r,\alpha)| < \delta_1$ for $r < \delta_2$ and $|\alpha - \overline{\alpha}|< \delta_2 $ before the solution hits the wall. If we set
\[ W_{0}(r,\alpha) := \displaystyle\sum_{j\in I^{0}_{\overline{\alpha}}} w_{j}(r,\alpha), \] 
the non-degeneracy condition-(i) and \eqref{ODE} imply that
\[ -\frac{d}{dr}\left(r^{n-1}\frac{d}{dr}W_{0}(r,\alpha)\right) \geq \lambda(\overline{\alpha})r^{n-1 + \sigma}. \]
Integrating this twice with respect to $r$ yields
\[  W_{0}(r,\alpha) \leq \left( \displaystyle\sum_{j\in I^{0}_{\overline{\alpha}}} \alpha_j\right) - \frac{\lambda(\overline{\alpha})}{(2+ \sigma)(n+\sigma)} r^{2 + \sigma},\]
thus
\[  w_{j}(r,\alpha) \leq W_{0}(r,\alpha) \leq \left( \displaystyle\sum_{j\in I^{0}_{\overline{\alpha}}} \alpha_j\right) - \frac{\lambda(\overline{\alpha})}{(2+ \sigma)(n+\sigma)} r^{2 + \sigma} \,\text{ for }\, j \in I^{0}_{\overline{\alpha}}.\]
We can then choose $\delta > 0$ sufficiently small so that if $|\alpha - \overline{\alpha}| < \delta$, then there is a smallest value $r_{\alpha} \leq \delta_{2}$ for which $w_{j_0}(r_{\alpha},\alpha) = 0$ for some $j_{0} \in  I^{0}_{\overline{\alpha}}$. Hence,
\begin{equation*}
|\psi(\alpha) - \psi(\overline{\alpha})| = |\psi(\alpha) - \overline{\alpha}| \leq |w(r_{\alpha},\alpha) - \overline{\alpha}| < \epsilon \,\text{ whenever }\, |\alpha - \overline{\alpha}| < \delta.
\end{equation*}

\noindent{\bf Case (2):} Since the source terms $f_{i}$ are non-negative, we have that $u_{i_{0}}^{'}(r_0, \overline{\alpha}) < 0$ by a direct computation or simply by Hopf's Lemma. This transversality condition along with the ODE stability imply that for $\alpha$ sufficiently close to $\overline{\alpha}$, the solution to this perturbed IVP must hit the wall and $\psi(\alpha)$ must be close to $\psi(\overline{\alpha})$. \\

\noindent{\bf Case (3):} Observe that from \eqref{ODE}, we have that $0=\lim_{r\longrightarrow \infty} F(r,\psi(\overline{\alpha}))$ by elementary ODE or elliptic theory. The non-degeneracy condition-(ii) further implies that $\psi(\overline{\alpha}) \in \partial \mathbb{R}^{L}_{+}$. In fact, we claim that $\psi(\overline{\alpha}) = 0$. To see this, assume otherwise i.e. assume $\psi(\overline{\alpha})$ is a non-zero boundary point. Without loss of generality, we can assume from \eqref{ODE} and the non-degeneracy condition-(i) that there is a $j_0$ such that
\begin{equation*}
-\frac{d}{dr}\left(r^{n-1}\frac{d}{dr}w_{j_0}(r,\overline{\alpha})\right) = r^{n-1}f_{j_0}(r,w(r,\overline{\alpha})) \geq \lambda(\psi(\overline{\alpha})) r^{n-1+\sigma} \,\text{ for }\, r \geq r_1 \gg 1.
\end{equation*}
Here, $r_1$, $\lambda$ and $\sigma$ are suitable constants depending on $\psi(\overline{\alpha})$ and we are choosing $r_1 \gg 1$ so that $w(r,\overline{\alpha})$ is sufficiently close to $\psi(\overline{\alpha})$. From this, we have that 
\begin{equation*}
w_{j_0}(r,\overline{\alpha}) \leq C - \frac{\lambda}{(n+\sigma)(2+\sigma)}r^{2+\sigma} \,\text{ for }\, r \geq r_1,
\end{equation*}
where $C=C(n,r_1,\lambda,\sigma)>0$ is some constant. But this implies that the solution must hit the wall, which contradicts that $w(r,\overline{\alpha})$ is a positive entire solution of \eqref{ODE}. Thus, $\psi(\overline{\alpha})=0$.

Since $\psi(\overline{\alpha}) = 0$, we can choose a sufficiently large $R>0$ so that $|u(R,\overline{\alpha})| < \epsilon/2$. Then, by ODE stability, we can choose a $\delta > 0$ for which
\begin{equation*}
u(r,\alpha) > 0 \,\,\text{ and }\,\, |u(r,\alpha) - u(r,\overline{\alpha})| < \epsilon/2 \,\,\text{ on }\,\, [0,R] \,\,\text{ whenever } |\overline{\alpha} - \alpha| < \delta.
\end{equation*}
Hence,
\begin{equation*}
|\psi(\overline{\alpha}) - \psi(\alpha)| = |\psi(\alpha)| \leq |u(R,\alpha)| \leq |u(R,\overline{\alpha}) - u(R,\alpha)| + |u(R,\overline{\alpha})| <\epsilon. 
\end{equation*}
This completes the proof that $\psi$ is continuous at $\overline{\alpha}\in \mathbb{R}^{L}_{+}$.
\end{proof}

\subsection{Proof of Corollaries \ref{corollary1} and \ref{corollary2}}\label{final section} It is straightforward to check that equation \eqref{wHLS equation} and system \eqref{wHLS system}---after a reduction to a second-order system, if necessary---are non-degenerate for $k > 1$. Subsequently, Corollary \ref{corollary1} is a direct consequence of Theorems \ref{theorem1} and \ref{theorem2} and Corollary \ref{corollary2} is a consequence of Theorems \ref{theorem1} and \ref{theorem3}. However, the non-degeneracy condition-(ii) does not necessarily hold when $k=1$. Nevertheless, it suffices to show that the continuity of the target map still holds for this case since the existence result of both corollaries will follow accordingly. Let us describe how to show the continuity of the target map when $k=1$. By adopting similar arguments used in the proof of Lemma \ref{lemma2} for Case (iii), we can still show that the entire positive solutions must decay to zero at infinity. Thus, we arrive at the same conclusion that $\psi(\overline{\alpha}) = 0$ and the continuity of the target map still holds for this case. The continuity of $\psi$ at $\overline{\alpha}$ under Case (i) and (ii) also holds and follows the same exact arguments used in the proof of Lemma \ref{lemma2}. This completes the proof of both corollaries. \qed \\

\noindent{\bf Acknowledgement}
\medskip

The author would like to express his appreciation to Professor Congming Li for his guidance and for the meaningful discussions.


\bibliographystyle{elsarticle-num.bst}

\end{document}